\documentclass[11pt,a4paper,reqno,draft]{amsart}
\usepackage[T1]{fontenc}
\usepackage{mathrsfs}
\usepackage{amsfonts,amssymb,amsmath,amsgen,amsthm}
\usepackage{color}
\usepackage{amsbsy}
\usepackage{mathrsfs}
\usepackage{bbm}
\usepackage{hyperref}
\usepackage{titletoc}
\usepackage{enumerate}
\usepackage{subfigure}
\usepackage[subfigure]{ccaption}
\usepackage[all]{xy}
\usepackage{pdfsync}

\addtolength{\hoffset}{-0.6cm} \addtolength{\textwidth}{1.2cm}
\addtolength{\headsep}{0.7cm}
\addtolength{\voffset}{-0.8cm}%

\theoremstyle{plain}
\newtheorem{theorem}{Theorem}[section]

\newtheorem{lemma}[theorem]{Lemma}
\newtheorem{corollary}[theorem]{Corollary}
\newtheorem{proposition}[theorem]{Proposition}

\newtheorem{remark}[theorem]{Remark}

\numberwithin{equation}{section}

\catcode`@=11
\def\section{\@startsection{section}{1}%
  \z@{1.5\linespacing\@plus\linespacing}{.5\linespacing}%
  {\normalfont\bfseries\large\centering}}
\catcode`@=12
%

\def\RR{{\mathbb R}}
\def\NN{{\mathbb N}}

\def\ni{\noindent}
\def\bs{\bigskip}

\def\eps{\varepsilon}
\def\fref#1{{\rm (\ref{#1})}}

\def\pa{\partial}
\def\na{\nabla}

\def\calH{{\mathcal H}}

\def\bx{{\mathbf x}}
\def\d{{\mathrm d}}
\DeclareMathOperator{\RE}{Re}
\DeclareMathOperator{\IM}{Im}

\sloppy
\begin{document}

\title[Dimension reduction for rotating BEC]{Dimension reduction for rotating 
Bose-Einstein condensates with anisotropic confinement}

\author[F. M\'ehats]{Florian M\'ehats}
\address{IRMAR, Universit\'e de Rennes 1 and INRIA, IPSO Project}
\email{florian.mehats@univ-rennes1.fr}

\author[C. Sparber]{Christof Sparber}
\address{Department of Mathematics, Statistics, and Computer Science, University of Illinois at Chicago}
\email{sparber@math.uic.edu}

\begin{abstract}
We consider the three-dimensional time-dependent Gross-Pitaevskii equation arising in the description of rotating Bose-Einstein condensates 
and study the corresponding scaling limit of strongly anisotropic confinement potentials. The resulting effective equations in one or 
two spatial dimensions, respectively, are rigorously obtained as special cases of an averaged three dimensional limit model. In the particular 
case where the rotation axis is not parallel to the strongly confining direction the resulting limiting model(s) include a negative, and thus, purely 
repulsive quadratic potential, which is not present in the original equation and which can be seen as an effective centrifugal force counteracting the 
confinement.
\end{abstract}

\date{\today}

\subjclass[2000]{35Q55, 35B25}
\keywords{Gross-Pitaevskii equation, Bose-Einstein condensation, dimension reduction, averaging, angular momentum operator}
\thanks{F. M. acknowledges support by the ANR-FWF Project Lodiquas ANR-11-IS01-0003
and by the ANR project Moonrise ANR-14-CE23-0007-01. C.S. acknowledges support by the NSF through grant nos. DMS-1161580 and DMS-1348092}

\maketitle

\section{Introduction and main result}
\label{sec:intro}

We are interested in the dimension reduction problem arising in the description of {\it rotating Bose-Einstein condensates} with strongly {\it anisotropic confinement} potential. 
In physics experiments such potentials are used to obtain effective one-dimensional (called cigar-shaped) or two-dimensional (called pancake-shaped) condensates which, 
among other features, exhibit different stability and instability properties than the usual three dimensional case
(for a general introduction to the physics of Bose-Einstein condensates, see, e.g., \cite{Pethick, PitaevskiiStringari}). 
The present work aims to give a rigorous justification to the use of these approximate lower-dimensional models. In comparison with earlier studies in the mathematics literature, see
\cite{BaoBenCai, BaoP, abdallah2011second, bcm, ben2005nonlinear}, the 
main novelty in our work is the presence of an additional angular momentum rotation term, whose strong interaction with the confinement will, in general, result in a nontrivial 
effect within the limiting model obtained.

The starting point of our investigation is the three-dimensional {\it Gross-Pitaevskii equation}, 
describing the Bose-Einstein condensate in a mean-field approximation, cf. \cite{LiebSeiringer, LSRot, PitaevskiiStringari}.
Rescaled into dimensionless form (see, e.g., \cite{ben2005nonlinear}) and in a rotating reference frame (which is customary used throughout the literature), this {\it nonlinear Schr\"odinger equation} (NLS) 
reads
\begin{equation}
\label{GPEinit}
i\pa_t \psi=-\frac{1}{2}\Delta\psi+\left(\frac{|x|^2}{2}+\frac{z^2}{2\eps^4}\right)\psi+i\Omega\cdot\left(\bx\wedge\na\right)\psi+\beta^\eps |\psi|^{2\sigma}\psi,
\end{equation}
with an initial data $\psi(t=0,\bx)=\eps^{-1/2}\psi_0(x,z/\eps)$ and where $\Omega\equiv (\Omega_1, \Omega_2, \Omega_z)$ is a given vector of $\RR^3$, describing the {\it rotation axis}. 
Here, the exponent of the nonlinearity is assumed to be $1\leq \sigma<2$. In particular, this means that the nonlinearity is
$H^1(\RR^3)$-subcritical, cf. \cite{cazenave2003semilinear}. Note that this includes the cubic case $\sigma =1$, which is the physically most relevant situation. 
The space variable $\bx\in \RR^3$ splits into $\bx=(x,z)\in\RR^{2}\times\RR$, with $x\equiv (x_1,x_2)$. Finally, we assume that $\eps \in (0,1]$ is a {\it small parameter} describing 
the anisotropy within the confining potential. In the following we shall be interested in the limit $\eps\to 0$ for solutions to \eqref{GPEinit}. 
Note that we thereby assume that the initial wave function is already confined at the scale epsilon in the $z$-direction (an assumption which is consistent with the asymptotic limiting regime considered) 
and such that its total {\it mass} $\| \psi(t=0,\cdot ) \|^2_{L^2} = 1$, uniformly in $\eps$.

Let us rescale the variables. We set $$x'=x, \quad z'=\frac{z}{\eps}, \quad \psi^\eps(t,x',z')=\eps^{1/2}\psi \left(t,x',\eps z' \right)$$
and assume that $\beta^\eps=\lambda \eps^{\sigma}$, where $\lambda\in \RR$ is fixed. We are thus in a weak interaction regime similar to \cite{bcm, ben2005nonlinear}. Under this rescaling the NLS 
becomes (dropping the primes in the variables for simplicity)
\begin{align}
i\pa_t \psi^\eps=&\, \frac{1}{\eps^2}\calH_z\psi^\eps-\frac{i}{\eps}\left(\Omega_2x_1-\Omega_1x_2\right)\pa_z\psi^\eps+\calH_x\psi^\eps-\Omega_zL_z\psi^\eps\nonumber\\
&\, -i\eps z\left(\Omega_1\pa_{x_2}-\Omega_2\pa_{x_1}\right)\psi^\eps+\lambda|\psi^\eps|^{2\sigma}\psi^\eps
\label{eq:psi}
\end{align}
with $\psi^\eps(t=0,x,z)=\psi_0(x,z)$. Here, and in the following, we denote 
$$\calH_z=-\frac12\pa_z^2+\frac{z^2}{2},\qquad \calH_x=-\frac12\Delta_x+\frac{|x|^2}{2},\quad L_z=ix_2\pa_{x_1}-ix_1\pa_{x_2}\,,$$
where $L_z$ is the angular momentum operator associated to a rotation around the negative $z$-axis.

In order to get rid of the singular rotation term proportional to $\eps^{-1}$ in \eqref{eq:psi}, we shall invoke the following (unitary) change of unknown. Setting
\begin{equation}\label{eq:trafo}
\psi^\eps(t,x,z)=e^{i\eps z(\Omega_1x_2-\Omega_2x_1)}u^\eps(t,x,z),
\end{equation}
we obtain
\begin{equation}
\begin{split}
&\, i\pa_t u^\eps= \frac{1}{\eps^2}\calH_zu^\eps+\calH_x u^\eps-\frac{1}{2}\left(\Omega_2x_1-\Omega_1x_2\right)^2u^\eps-\Omega_zL_z u^\eps+\lambda|u^\eps|^{2\sigma}u^\eps\\
&\, + \frac{3\eps^2}{2}\left(\Omega_1^2+\Omega_2^2\right)z^2u^\eps-\eps\Omega_z\left(\Omega_1x_1+\Omega_2x_2\right)zu^\eps + 2i \eps z (\Omega_2 \partial_{x_1} - \Omega_1 \partial_{x_2})u^\eps,
\end{split}
\label{eq:u}
\end{equation}
subject to initial data 
\begin{equation}
\label{init:u}
u^\eps(t=0)=u_0^\eps=e^{-i\eps z(\Omega_1x_2-\Omega_2x_1)}\psi_0.
\end{equation}
Note that in \eqref{eq:u}, the only singular term left is $\eps^{-2} \calH_z$. We consequently expect that by filtering the associated rapid oscillations the 
function $e^{i t \calH_z/\eps^2} u^\eps(t)$ will converge to some finite limit $\phi(t)$, as $\eps \to 0$.

\bs
A suitable functional framework for the analysis of our problem is the scale of Sobolev spaces adapted to $\calH_z$ and $\calH_x$. For any real number $s\geq 0$, we denote
$$\Sigma^s:=\left\{u\in H^s(\RR^3)\,:\,|\bx|^su\in L^2(\RR^3)\right\}.$$
According  to \cite{helffer,bcm}, this Hilbert space can be equipped with the following equivalent norms:
\begin{equation}
\label{eq:normeq}
\|u\|_{\Sigma^s}^2:= \|u\|_{H^s}^2+\||\bx|^su\|_{L^2}^2\simeq \|\calH_z^{s/2}u\|_{L^2}^2+\|\calH_x^{s/2}u\|_{L^2}^2  + \| u \|_{L^2}^2.
\end{equation}
It is also usefull to recall that, for all $0\leq \ell \leq s$, we have
\begin{equation}
\label{equivnorm}
\left\||\bx|^{s-\ell}(-\Delta)^{\ell/2}u\right\|_{L^2}\lesssim \|u\|_{\Sigma^s}\quad \mbox{and}\quad \left\|(-\Delta)^{\ell/2}|\bx|^{s-\ell}u\right\|_{L^2}\lesssim \|u\|_{\Sigma^s}.
\end{equation}
For $s>3/2$, $\Sigma^s$ is an algebra. Moreover, the self-adjoint operators $\calH_z$ and $\calH_x$ generate the groups of isometries $\theta\mapsto e^{i\theta \calH_z}$ and $\theta\mapsto e^{i\theta \calH_x}$ on any $\Sigma^s$, $s\geq 0$.

\bs
To derive the limit model as $\eps\to 0$, we need to introduce the following nonlinear function:
\begin{align*}
F(\theta,u)&=e^{i\theta \calH_z}\left(\left|e^{-i\theta \calH_z}u\right|^{2\sigma}e^{-i\theta \calH_z}u\right)\\
&=e^{i\theta (\calH_z-1/2)}\left(\left|e^{-i\theta (\calH_z-1/2)}u\right|^{2\sigma}e^{-i\theta (\calH_z-1/2)}u\right),
\end{align*}
and study the behavior of $F\left({t}/{\eps^2}, u\right)$, as $\eps \to 0$.
For $s>3/2$, $\Sigma^s$ is an algebra and it is readily seen that $F\in C(\RR\times\Sigma^s,\Sigma^s)$.
Moreover, since the spectrum of the quantum harmonic oscillator $\calH_z$ is $\{n+1/2,\; n\in \NN\}$, the operator $e^{i\theta (\calH_z-1/2)}$ is $2\pi$ periodic with respect to $\theta$, so $F$ is also $2\pi$ periodic with respect to $\theta$. Denoting the average of this function by
\begin{equation}\label{eq:average}
\begin{split}
F_{\rm av}(u):= &\, \lim_{T \to \infty} \frac{1}{T} \int_0^T F\left(\theta, u\right) d\theta\\
= &\, \frac{1}{2\pi}\int_0^{2\pi}e^{i\theta \calH_z}\left(\left|e^{-i\theta \calH_z}u\right|^{2\sigma}e^{-i\theta \calH_z}u\right)d\theta,
\end{split}
\end{equation}
the {\it limit model} as $\eps\to 0$ reads
\begin{equation}
i\pa_t \phi=-\frac{1}{2}\Delta_x \phi+\frac{1}{2}\left(|x|^2-\left(\Omega_2x_1-\Omega_1x_2\right)^2\right)\phi-\Omega_zL_z \phi+\lambda F_{\rm av}(\phi)
\label{eq:phi}
\end{equation}
with the initial data $\phi(t=0)=\psi_0$. The Gross-Pitaevskii type energy associated to this equation is
\begin{align*}
E(\phi)=&\ \frac{1}{2}\int_{\RR^3}|\na_x\phi|^2dxdz+\frac{1}{2}\int_{\RR^3}\left(|x|^2-(\Omega_2x_1-\Omega_1x_2)^2\right)|\phi|^2dxdz\\
&-\Omega_z\langle L_z\phi,\phi\rangle_{L^2}+\frac{\lambda}{2\pi(\sigma+1)}\int_{\RR^3} \int_0^{2\pi}\left|e^{-i\theta \calH_z}\phi\right|^{2\sigma+2}d\theta \, dx\, dz,
\end{align*}
where here and in the following, \[
\langle u,v\rangle_{L^2}=\RE\int_{\RR^3}u\overline vdxdz.\] 
Note that \eqref{eq:phi} is still a model in {\it three} spatial dimensions. Except for the nonlinear averaging operator $F_{\rm av}(\phi)$, however, 
the variable $z$ only enters as a parameter 
and hence the linear part of the dynamics with respect to $z$ is, in fact, trivial. This allows to derive from \eqref{eq:phi} an {\it effective two-dimensional} limiting model, provided the initial data is {\it polarized} 
on a single mode of $\calH_z$, see Corollary \ref{cor:polar} below. 

One should also note that in \eqref{eq:phi} there is a non-trivial effect due the presence of the rotation. Indeed, in the case where the rotation axis is not parallel to the $z$-axis, 
i.e., if $\Omega_1, \Omega_2 \not =0$, a repulsive quadratic potential is present in the limiting model (see also the discussion at the beginning of Section \ref{sec:cauchyav} below). 
The reason for this effect becomes apparent from the scaling of equation \eqref{eq:psi}, which includes a rotation term of order $O(\eps^{-1})$. The latter 
becomes large in the limit of strong confinement $\eps \to 0$, resulting in an effective centrifugal force counteracting the original trap. In the physics literature, it seems that 
it is almost always assumed that the rotation axis is equal to the $z$-axis, and hence, this effect is almost never considered.
We finally remark that, at least formally, a second order averaging procedure (similar to \cite{abdallah2011second,delebecque}) can be used to derive \eqref{eq:phi} from \eqref{eq:psi} directly. In order to make this 
procedure rigorous, though, uniform (in $\eps$) energy estimates are needed which seem to be rather difficult to obtain on the level of \eqref{eq:psi} (given its singular scaling).  
Thus, instead of working with \eqref{eq:psi} directly, we use the change of variables \eqref{eq:trafo} which yields the same effect as the second order averaging 
and also allows us to use the better behaved model \eqref{eq:u}.

\bs
Our main result is the following theorem.
\begin{theorem}
\label{mainthm}
Let $1\leq \sigma<2$ and $\psi_0\in \Sigma^2$. Then the following holds.\\[1mm]
{\rm (i)} The limit model \eqref{eq:phi} admits a unique maximal solution $\phi \in C([0,T_{\rm max}),\Sigma^2)\cap C^1([0,T_{\rm max}),L^2)$, with $T_{\rm max}\in (0,+\infty]$, such that for all $t\in [0, T_{\rm max})$:
\[
\| \phi(t)\|_{L^2} = \| \psi_0 \|_{L^2}, \quad E(\phi(t)) = E(\psi_0), \quad \langle \calH_z \phi(t), \phi(t) \rangle_{L^2} = \langle \calH_z \psi_0, \psi_0 \rangle_{L^2}.
\]
Moreover,  we have the blow-up alternative:
$$\mbox{if}\quad T_{\rm max}<+\infty\quad \mbox{then} \quad \lim_{t\to T_{\rm max}}\|\na_x\phi(t)\|_{L^2}=+\infty.$$
{\rm (ii)} For all $T\in(0,T_{\rm max})$, there exists $\eps_T>0$, $C_T>0$ such that, for all $\eps\in(0,\eps_T]$, \eqref{eq:psi} admits a unique solution $\psi^\eps\in C([0,T],\Sigma^2)\cap C^1([0,T],L^2)$, which is uniformly bounded with respect to $\eps\in (0,\eps_T]$ in $L^\infty((0,T),\Sigma^2)$ and satisfies the error bound
$$\max_{t\in [0,T]}\left\|\psi^\eps(t)-e^{-it \calH_z/\eps^2}\phi(t)\right\|_{L^2}\leq C_T\,\eps.$$
\end{theorem}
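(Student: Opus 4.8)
The plan is to treat the two parts separately, exploiting throughout the filtering of the fast oscillation generated by $\calH_z/\eps^2$.

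\emph{Part (i).} I would first obtain local well-posedness of \eqref{eq:phi} in $\Sigma^2$ by a fixed-point argument on Duhamel's formula: the linear generator $A:=\calH_x-\tfrac12(\Omega_2x_1-\Omega_1x_2)^2-\Omega_z L_z$ is self-adjoint with an (indefinite) quadratic symbol, so $e^{-itA}$ is a strongly continuous group on each $\Sigma^s$, unitary on $L^2$ and of at most exponential growth on $\Sigma^2$; and $F_{\rm av}$ is locally Lipschitz $\Sigma^2\to\Sigma^2$ because $\Sigma^2$ is an algebra, $e^{i\theta\calH_z}$ is an isometry of $\Sigma^s$, and $F(\theta,\cdot)$ is estimated uniformly in $\theta$. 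Conservation of $\|\phi\|_{L^2}$ and of $E(\phi)$ is then standard (by regularization); the third invariant is the interesting one and uses the $2\pi$-periodicity: writing $w_\theta:=e^{-i\theta\calH_z}\phi$, one has $\tfrac{d}{d\theta}\|w_\theta\|_{L^{2\sigma+2}}^{2\sigma+2}=(2\sigma+2)\,\IM\!\int_{\RR^3}|w_\theta|^{2\sigma}\overline{w_\theta}\,\calH_z w_\theta$, and since $w_{2\pi}=-\phi$, integrating over one period yields $\IM\!\int_{\RR^3}\calH_z\phi\,\overline{F_{\rm av}(\phi)}=0$; combined with the fact that $\calH_z$ commutes with $A$ this gives $\tfrac{d}{dt}\langle\calH_z\phi,\phi\rangle_{L^2}=0$. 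Finally, for the blow-up alternative: since $\|\phi\|_{L^2}$ and $\langle\calH_z\phi,\phi\rangle_{L^2}$ are conserved, only the $x$-weight of $\|\phi\|_{\Sigma^1}$ needs control, and here I would use a virial identity — every term of \eqref{eq:phi} other than $-\tfrac12\Delta_x$ has a real integrand against $|x|^2\overline\phi$ (the quadratic potential is real, $L_z$ commutes with $|x|$, and by gauge invariance and periodicity $\IM\!\int|x|^2\overline\phi\,F_{\rm av}(\phi)=0$), so
\begin{equation*}
\frac{d}{dt}\,\||x|\phi\|_{L^2}^2 \;=\; 2\,\IM\!\int_{\RR^3} x\cdot\overline{\phi}\,\na_x\phi \;\le\; 2\,\||x|\phi\|_{L^2}\,\|\na_x\phi\|_{L^2}.
\end{equation*}
Hence if $\|\na_x\phi(t)\|_{L^2}$ stays bounded on $[0,T)$ then so does $\|\phi(t)\|_{\Sigma^1}$; propagating this to a $\Sigma^2$ bound on the same interval is a classical persistence-of-regularity argument relying on the $H^1$-subcriticality $\sigma<2$, which yields the stated criterion.

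\emph{Part (ii).} Set $v^\eps(t):=e^{it\calH_z/\eps^2}u^\eps(t)$ with $u^\eps$ solving \eqref{eq:u}. Since $\calH_x$, $(\Omega_2x_1-\Omega_1x_2)^2$ and $L_z$ act only on $x$ and commute with $\calH_z$, and since $e^{it\calH_z/\eps^2}(|u^\eps|^{2\sigma}u^\eps)=F(t/\eps^2,v^\eps)$, one finds
\begin{equation*}
i\pa_t v^\eps \;=\; A v^\eps + \lambda F\!\left(t/\eps^2,v^\eps\right) + \eps\,R_1^\eps(t)v^\eps + \eps^2 R_2^\eps(t)v^\eps,
\end{equation*}
where $R_1^\eps,R_2^\eps$ are the conjugations by $e^{it\calH_z/\eps^2}$ of the $O(\eps)$- and $O(\eps^2)$-operators of \eqref{eq:u} (built from $z$, $z\pa_{x_j}$, $z^2$). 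Thus $v^\eps$ and $\phi$ solve the same equation up to: the oscillatory nonlinearity $F(t/\eps^2,v^\eps)$ in place of $F_{\rm av}(\phi)$; the remainders; and initial data differing by $O(\eps)$ in $L^2$, since $\|u_0^\eps-\psi_0\|_{L^2}\lesssim\eps\,\||\bx|^2\psi_0\|_{L^2}\lesssim\eps\|\psi_0\|_{\Sigma^2}$ and, similarly, $\|u_0^\eps\|_{\Sigma^2}\lesssim\|\psi_0\|_{\Sigma^2}$ uniformly in $\eps$. Fix $T<T_{\rm max}$, set $M:=1+\sup_{[0,T]}\|\phi(t)\|_{\Sigma^2}$, and run a continuation argument on the maximal subinterval of $[0,\min(T,T^\eps))$ on which $\|v^\eps(t)\|_{\Sigma^2}\le 2M$: I would (1) close the uniform $\Sigma^2$ bound (improving $2M$ to $\tfrac32M$ for $\eps$ small) and (2) prove the $L^2$ error bound; together these force the subinterval to be all of $[0,T]$ for $\eps\le\eps_T$. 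The bound for $\psi^\eps$ then follows from $\|\psi^\eps-u^\eps\|_{L^2}=\|(e^{i\eps z(\Omega_1x_2-\Omega_2x_1)}-1)u^\eps\|_{L^2}\lesssim\eps\,\||\bx|^2u^\eps\|_{L^2}\lesssim\eps M$.

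Step (2) is the softer one. Writing $w^\eps:=v^\eps-\phi$ and using that $e^{-itA}$ is an $L^2$-isometry, Duhamel gives $\|w^\eps(t)\|_{L^2}\le\|w^\eps(0)\|_{L^2}+|\lambda|\int_0^t\|F(s/\eps^2,v^\eps)-F(s/\eps^2,\phi)\|_{L^2}\,ds+\big\|\int_0^t e^{-i(t-s)A}[F(s/\eps^2,\phi)-F_{\rm av}(\phi)]\,ds\big\|_{L^2}+\eps\,C(M,T)$. The first integrand is $\le C(M)\|w^\eps(s)\|_{L^2}$ because $F(\theta,\cdot)$ is Lipschitz in $L^2$, uniformly for arguments bounded in $\Sigma^2$ (using $\Sigma^2\hookrightarrow L^\infty$ and the $\Sigma^2$-isometry $e^{\pm i\theta\calH_z}$); the remainder, carrying an $\eps$ in front and acting on a function bounded in $\Sigma^2$, contributes $\eps\,C(M,T)$, for which only $\|R_j^\eps v^\eps\|_{L^2}\lesssim\|v^\eps\|_{\Sigma^2}$ is needed; and the oscillatory term is handled by the classical averaging lemma: the primitive $H(\theta,u):=\int_0^\theta[F(\tau,u)-F_{\rm av}(u)]\,d\tau$ is $2\pi$-periodic in $\theta$, $\Sigma^2$-bounded and locally Lipschitz, so writing $F-F_{\rm av}=\eps^2\tfrac{d}{ds}[H(s/\eps^2,\phi(s))]-\eps^2(D_uH)(s/\eps^2,\phi)\,\pa_s\phi$ and integrating by parts in $s$ (with $\|\pa_s\phi\|_{L^2}\le C(M)$) makes this term $O(\eps^2)$ in $L^2$. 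Since $\|w^\eps(0)\|_{L^2}\lesssim\eps$, Gronwall then gives $\|w^\eps(t)\|_{L^2}\le C_T\,\eps$ on the subinterval.

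The hard part is Step (1), the uniform-in-$\eps$ $\Sigma^2$ bound, where the singular term $\eps^{-2}\calH_z$ really bites: a naive $\Sigma^2$ energy estimate does not close, because the $\eps$- and $\eps^2$-remainders contain $z$, $z\pa_{x_j}$, $z^2$, which are not bounded on $\Sigma^2$, and because the energy of \eqref{eq:u} has size $O(\eps^{-2})$ and so does not by itself yield $O(1)$ control. The plan is to carry out the $\Sigma^2$ energy estimate with the equivalent norm $\|\cdot\|_{\Sigma^2}^2\simeq\|\calH_z\cdot\|_{L^2}^2+\|\calH_x\cdot\|_{L^2}^2+\|\cdot\|_{L^2}^2$ and to extract the cancellations that make the remainders harmless: because $P:=\Omega_2\pa_{x_1}-\Omega_1\pa_{x_2}$ is antisymmetric and commutes with $z$ and with $\calH_z$, the genuinely dangerous self-interaction terms (e.g.\ $\RE\langle\calH_x v^\eps,\,zP\calH_x v^\eps\rangle_{L^2}$) vanish identically, so the surviving remainder contributions are bounded by $\eps\,C\|v^\eps\|_{\Sigma^2}^2$; the $\eps^2z^2$ term is treated similarly, its nonvanishing fast-average being the harmless $\tfrac32(\Omega_1^2+\Omega_2^2)\eps^2\calH_z$. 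The nonlinear contribution is handled, as in Part (i), by the subcriticality $\sigma<2$ on short time intervals, which requires a prior uniform $\Sigma^1$ bound; the latter I would obtain from conservation of $\|u^\eps\|_{L^2}$, the near-conservation $\tfrac{d}{dt}\langle\calH_z u^\eps,u^\eps\rangle_{L^2}=O(\eps)\|u^\eps\|_{\Sigma^1}^2$ (every non-commuting remainder carries an $\eps$), the virial identity $\tfrac{d}{dt}\||x|u^\eps\|_{L^2}^2=2\,\IM\!\int x\cdot\overline{u^\eps}\,\na_x u^\eps-4\eps\!\int(\Omega_2x_1-\Omega_1x_2)z|u^\eps|^2$ (again the indefinite potential drops out), and conservation of the energy of \eqref{eq:u} read together with the near-conservation of $\langle\calH_z u^\eps,u^\eps\rangle_{L^2}$, so that the $O(\eps^{-2})$ pieces cancel and $\langle\calH_x u^\eps,u^\eps\rangle_{L^2}$ is controlled. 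I expect this last bookkeeping — checking that after all cancellations the surviving terms are genuinely $O(1)$ and not $O(\eps^{-1})$ — to be the most delicate point, and the place where the precise form of the change of variables \eqref{eq:trafo} (equivalent to a second-order averaging) is essential.
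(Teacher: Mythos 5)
Your overall architecture for part (ii) --- filtering $v^\eps=e^{it\calH_z/\eps^2}u^\eps$, comparing with $\phi$ via Duhamel, handling the oscillatory difference $F(s/\eps^2,\phi)-F_{\rm av}(\phi)$ by integrating its periodic primitive by parts in $s$ to gain $\eps^2$, and closing with Gronwall --- is exactly the paper's, and your Step (2) is correct as written. The first genuine gap is in part (i): you dismiss the passage from a $\Sigma^1$ bound to a $\Sigma^2$ bound (which is what allows the blow-up alternative to be stated in terms of $\|\na_x\phi\|_{L^2}$ alone) as a ``classical persistence-of-regularity argument''. It is not classical here: equation \eqref{eq:phi} has no dispersion in $z$ at all, $\Sigma^1(\RR^3)$ does not embed into $L^\infty$, and $\sigma$ may exceed $1$, so neither an $L^\infty$-based Gronwall nor the standard three-dimensional Strichartz argument applies. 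The paper has to build vectorial Strichartz estimates in the mixed norms $L^q_tL^r_xL^2_z$ for the two-dimensional group $U(t)=e^{itH}$ (Lemma \ref{lemstri}), combine them with the anisotropic embeddings $H^1(\RR^3)\hookrightarrow L^4_x(\RR^2,L^\infty_z(\RR))$ and $H^1(\RR^3)\hookrightarrow L^r_x(\RR^2,L^2_z(\RR))$, and use the interpolation $\|F_{\rm av}(u)\|_{L^2}\leq C\|u\|_{H^2}^{\alpha}\|u\|_{H^1}^{2\sigma+1-\alpha}$ with $\alpha=\max(\sigma-1,0)<1$ --- this is precisely where $\sigma<2$ enters --- before bootstrapping separately on $\partial_t\phi$, $|x|^2\phi$ and $\calH_z\phi$. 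This entire layer is missing from your proposal.

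The second gap is your continuation argument in Step (1) of part (ii): you bootstrap on $\|v^\eps(t)\|_{\Sigma^2}\le 2M$ with $M=1+\sup_{[0,T]}\|\phi\|_{\Sigma^2}$ and claim to improve $2M$ to $\tfrac32M$ ``for $\eps$ small''. This cannot close: the nonlinear contribution to the $\Sigma^2$ energy inequality, $\frac{d}{dt}\|u^\eps\|_{\Sigma^2}^2\lesssim(1+\|u^\eps\|_{L^\infty}^{4\sigma})\|u^\eps\|_{\Sigma^2}^2$, carries no power of $\eps$, so Gronwall only returns $\|v^\eps(t)\|_{\Sigma^2}\le e^{C(M)t}\|v^\eps(0)\|_{\Sigma^2}$, which exceeds $\tfrac32M$ for $t$ of order one no matter how small $\eps$ is. The paper bootstraps instead on a quantity that \emph{is} recovered with $\eps$-smallness: it takes $M$ to be an $L^\infty$ bound on the profile $e^{-it\calH_z/\eps^2}\phi$, defines $T^\eps$ by the condition $\|u^\eps\|_{L^\infty}\le 2M$ as in \eqref{defTeps}, closes a (possibly large) $\Sigma^2$ bound $C_M$ on $[0,T^\eps]$ by the energy estimate of Lemma \ref{lemunif}, and only then recovers $\|u^\eps\|_{L^\infty}\le M+C\eps^{1/4}<\tfrac32M$ by interpolating the $O(\eps)$ $L^2$ error estimate against $C_M$ via Gagliardo--Nirenberg. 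Relatedly, your proposed detour through conservation of the energy of \eqref{eq:u} to get a uniform $\Sigma^1$ bound is unnecessary and problematic, since that energy is indefinite (the rotation term $\Omega_zL_z$ has no sign and the quadratic potential may be partly repulsive); the paper avoids it entirely, the $L^\infty$ bootstrap making any separate $\Sigma^1$ step superfluous.
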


\bs
As an immediate corollary we have:
\begin{corollary}\label{cor:polar}
Denote by $(\chi_n, \lambda_n)_{n\in \NN}$ the $n$-th eigenfunction/eigenvalue-pair of the one-dimensional harmonic oscillator $\mathcal H_z$.
Assume that $\psi_0\in \Sigma^2$ is such that
\[
\psi_0(x,z) = \varphi_0(x) \chi_n(z).
\]
Then for all $T\in(0,T_{\rm max})$ we have
$$\max_{t\in [0,T]}\left\|\psi^\eps(t)-e^{-it \lambda_n/\eps^2}\varphi(t)\chi_n\right\|_{L^2}\leq C_T\,\eps,$$
where $\varphi(t,x)$ solves the effective two-dimensional model
\begin{equation}
i\pa_t \varphi=-\frac{1}{2}\Delta_x \varphi+\frac{1}{2}\left(|x|^2-\left(\Omega_2x_1-\Omega_1x_2\right)^2\right)\varphi-\Omega_zL_z \varphi+\kappa_n  |\varphi|^{2\sigma} \varphi
\label{eq:varphi}
\end{equation}
with $\varphi(t=0,x)=\varphi_0 (x_1,x_2)$ and 
\[
\kappa_n := \lambda \int_\RR |\chi_n(z)|^{2\sigma +2} dz,
\]
the effective nonlinear coupling constant in the $n$-th energy band.
\end{corollary}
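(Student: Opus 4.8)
The plan is to deduce this directly from Theorem~\ref{mainthm}, the only real ingredient being the evaluation of the averaged nonlinearity $F_{\rm av}$ on a tensorised state. Write $P_n$ for the orthogonal projection in $L^2(\RR_z)$ onto $\mathrm{span}\{\chi_n\}$, acting on the $z$-variable of functions on $\RR^3$; since $\chi_n\in\mathcal S(\RR)$, $P_n$ is bounded on every $\Sigma^s$ and fixes $\varphi_0\otimes\chi_n$. The linear part of the limit model~\eqref{eq:phi}, namely $-\tfrac12\Delta_x$, the multiplication by the $x$-only potential $\tfrac12(|x|^2-(\Omega_2x_1-\Omega_1x_2)^2)$, and $-\Omega_zL_z$, acts solely on the $x$-variable and therefore commutes with $P_n$. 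Hence, once we know that $F_{\rm av}$ maps the range of $P_n$ into itself, equation~\eqref{eq:phi} restricted to that range will be a well-posed Cauchy problem which, under the identification $v\otimes\chi_n\leftrightarrow v$, is exactly~\eqref{eq:varphi}.

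The key computation is the following. Let $v=v(x)$ and set $u=v\otimes\chi_n$. Since $\calH_z\chi_n=\lambda_n\chi_n$ we have $e^{-i\theta\calH_z}u=e^{-i\theta\lambda_n}\,v\otimes\chi_n$, so that $|e^{-i\theta\calH_z}u|^{2\sigma}=|v|^{2\sigma}\otimes|\chi_n|^{2\sigma}$ and
\[
F(\theta,u)=e^{-i\theta\lambda_n}\,|v|^{2\sigma}v\ \otimes\ e^{i\theta\calH_z}\big(|\chi_n|^{2\sigma}\chi_n\big).
\]
Expanding $|\chi_n|^{2\sigma}\chi_n=\sum_{m\in\NN}c_m\chi_m$ in the Hermite basis of $L^2(\RR_z)$ and using $\calH_z\chi_m=\lambda_m\chi_m$ gives
\[
F(\theta,u)=|v|^{2\sigma}v\ \otimes\ \sum_{m\in\NN}c_m\,e^{i\theta(\lambda_m-\lambda_n)}\chi_m.
\]
Because $\mathrm{spec}(\calH_z)=\{m+\tfrac12:m\in\NN\}$ we have $\lambda_m-\lambda_n=m-n\in\ZZ$, whence $\frac1{2\pi}\int_0^{2\pi}e^{i\theta(\lambda_m-\lambda_n)}\,d\theta=\delta_{mn}$, and only the diagonal term survives the average~\eqref{eq:average}. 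Taking $\chi_n$ real, $c_n=\langle|\chi_n|^{2\sigma}\chi_n,\chi_n\rangle_{L^2(\RR_z)}=\int_\RR|\chi_n(z)|^{2\sigma+2}\,dz$, so that
\[
F_{\rm av}(v\otimes\chi_n)=\Big(\int_\RR|\chi_n(z)|^{2\sigma+2}\,dz\Big)\,|v|^{2\sigma}v\ \otimes\ \chi_n,
\]
and therefore $\lambda F_{\rm av}(v\otimes\chi_n)=\kappa_n\,|v|^{2\sigma}v\otimes\chi_n$ with $\kappa_n$ as in the statement. In particular $F_{\rm av}$ preserves the range of $P_n$.

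It remains to assemble the pieces. Local (in time) solvability of~\eqref{eq:varphi} in the two-dimensional space $\Sigma^2(\RR^2)$ holds exactly as in Theorem~\ref{mainthm}(i) (or follows by restricting~\eqref{eq:phi} to $\mathrm{ran}\,P_n$), producing $\varphi\in C([0,T),\Sigma^2(\RR^2))$ with $\varphi(0)=\varphi_0$. Plugging~\eqref{eq:varphi} into $i\pa_t(\varphi\otimes\chi_n)$ and using the commutation of the linear part with $P_n$ together with the identity above, one checks that $\varphi\otimes\chi_n$ solves~\eqref{eq:phi} with initial datum $\varphi_0\otimes\chi_n=\psi_0$. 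By the uniqueness statement in Theorem~\ref{mainthm}(i), $\phi(t)=\varphi(t)\otimes\chi_n$ for all $t\in[0,T_{\rm max})$ (in particular the two maximal times coincide, since $\|\na_x\phi\|_{L^2}=\|\chi_n\|_{L^2}\|\na_x\varphi\|_{L^2}$ and the blow-up alternative transfers). Finally, $e^{-it\calH_z/\eps^2}\phi(t)=e^{-it\lambda_n/\eps^2}\,\varphi(t)\otimes\chi_n$, so the error bound of Theorem~\ref{mainthm}(ii) is precisely the asserted estimate. The only genuine point is the averaging identity: the integer spacing of the spectrum of $\calH_z$ annihilates all resonances except the diagonal one, which is exactly why the three-dimensional averaged interaction collapses, on the $n$-th band, to a single power nonlinearity with the effective coupling $\kappa_n$; everything else is a routine transcription of the main theorem.
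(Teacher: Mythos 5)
Your proposal is correct and follows essentially the same route as the paper: the core step in both is the expansion of $|\chi_n|^{2\sigma}\chi_n$ in the Hermite basis and the observation that the integer spacing of $\mathrm{spec}(\calH_z)$ annihilates all off-diagonal terms in the average, so that \eqref{eq:phi} preserves the polarization and reduces to \eqref{eq:varphi}, after which Theorem \ref{mainthm}(ii) gives the error bound. The extra details you supply (the projection $P_n$, the uniqueness/blow-up transfer) are a slightly more careful write-up of the same argument.
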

This result follows from Theorem \ref{mainthm} (ii) and the fact that \eqref{eq:phi} preserves the initial polarization, i.e., admits solutions of the form $\phi(t,x,z)= \varphi(t,x) \chi_n(z)$. 
To see this, recall that the eigenfunctions $\{ \chi_m \}_{L^2}$ form an orthonormal basis of $L^2(\RR)$. Using this we can write
\[
e^{-it \calH_z/\eps^2} f (z) = \sum_{m\in \NN} e^{- i t\lambda_m}  \chi_m (z)  \langle \chi_m, f \rangle_{L^2}
\]
and hence \eqref{eq:average} implies
\[
F_{\rm av}( \varphi \chi_n )=\frac{1}{2\pi}|\varphi|^{2\sigma} \varphi \sum_{m\in \NN} \int_0^{2\pi} e^{i\theta(\lambda_m-\lambda_n)}d\theta \,  \chi_m \langle \chi_m, |\chi_n|^{2\sigma} \chi_n \rangle_{L^2} .
\]
However, $\lambda_m-\lambda_n \in \NN$ and thus, this integral is identically zero unless $m=n$, for which it is equal to $2\pi$. Thus, the whole sum collapses to one term only 
and we consequently obtain that in the 
case of polarized solutions, \eqref{eq:phi} reduces to \eqref{eq:varphi}. The latter is 
an effective two-dimensional model describing the 
degrees of freedom in the unconstrained direction. 

\bs
The paper is organized as follows: In Section \ref{sec:cauchy} we shall, as a first step, establish well-posedness of the Cauchy problem corresponding to both
the three dimensional NLS \eqref{eq:psi} and the averaged limiting model \eqref{eq:phi}. Once this is done, rigorous error estimates between the exact and the approximate 
solution will be established in Section \ref{sec:error}. In there, we shall also indicate how to obtain an improved error estimate, provided $\psi^\eps$ satisfies 
sufficiently strong regularity assumptions. Finally, in Section \ref{sec:1d} we shall show how to adapt our results to the situation with strong confinement in two spatial dimensions, and 
derive the associated limiting model.


\section{Analysis of the Cauchy problems}\label{sec:cauchy}

In this section we shall prove local and global well-posedness results for equation \eqref{eq:psi}, i.e., the original NLS in $d=3$ dimensions, and for the formal limiting model \eqref{eq:phi}. 
The analysis of the former is relatively standard and follows along the lines of \cite{ams}. We shall therefore only sketch the main ideas 
and rather focus on the Cauchy problem corresponding to \eqref{eq:phi}.

\subsection{The Cauchy problem corresponding to the averaged NLS model}\label{sec:cauchyav}
In this subsection we shall analyze the Cauchy problem
\begin{equation}
i\pa_t \phi=-\frac{1}{2}\Delta_x \phi+\frac{1}{2}\left(|x|^2-\left(\Omega_2x_1-\Omega_1x_2\right)^2\right)\phi-\Omega_zL_z \phi+\lambda F_{\rm av}(\phi)
\label{eq:phi2}
\end{equation}
with some general initial data $\phi(t=0)=\phi_0\in \Sigma^1$. 
Recall that $\phi$ depends on the space variables $x\in \RR^2$ and $z\in \RR$, but in this problem dispersive effects {\it only} occur in the $x$ direction (due to the lack of a second order derivative in $z$). 
The basic existence proof therefore requires several changes from the standard approach. 

To this end, let us first derive Strichartz estimates adapted to the situation at hand. We recall that, in dimension {\it two}, a pair $(q,r)$ is said to be admissible if $2\leq r<\infty$ and
$$\frac{1}{q}=\frac{1}{2}-\frac{1}{r}.$$
We denote by $U(t)=e^{itH}$ the strongly continuous group of unitary operators generated by the Hamiltonian
\begin{equation}
\label{H}
H=-\frac{1}{2}\Delta_x+\frac{1}{2}\left(|x|^2-\left(\Omega_2x_1-\Omega_1x_2\right)^2\right)-\Omega_zL_z.
\end{equation}
This operator can be seen as a special case of the Weyl-quantization of a (real-valued) second order polynomial $H(x,\xi)$. 
It is thus essentially self-adjoint on $C_0^\infty (\RR^2)\subset \Sigma^1\equiv \{f\in H^1(\RR^2)\, : \, |x| f \in L^2(\RR^2) \}$, cf. \cite{Ki}. 
In the case without rotation $\Omega_z =0$, $H$ is of the form of an anisotropic harmonic oscillator with potential
\begin{equation}\label{eq:pot}
V(x) = \frac{1-\Omega^2_2}{2}\, x^2_1+ \frac{1-\Omega^2_1}{2} \, x^2_2 - \Omega_1\Omega_2 x_1 x_2.
\end{equation}
Clearly, this potential becomes repulsive if $\Omega_1, \Omega_2>1$. Physically speaking this results in a loss of confinement, and thus, the destruction of the condensate.
On the other hand, by means of Young's inequality, one easily sees that a sufficient condition for confinement, i.e., $V(x)\to +\infty$ as $|x|\to \infty$, is 
\[
\Omega_1^2 + \Omega_2^2 <1.
\]
In this case $H$ has only pure point spectrum with no finite accumulation points. Similarly, in the case with rotation $\Omega_z\not =0$ the 
operator $H$ remains confining provided $\Omega_z$ is sufficiently small (with respect to $\Omega_1, \Omega_2$). This can be seen by rewriting \eqref{H} in the form of a 
magnetic Schr\"odinger operator
\[
H= \frac{1}{2} \big( \nabla + A(x)\big)^2  + V(x) - \frac{1}{2} \Omega_z^2 |x^\perp |^2
\]
where $V$ is as before and $A(x) = \Omega_z x^\perp$ with $x^\perp = (x_2, -x_1)$. In this form, the effect of the rotation term $L_z$ has been split into 
Coriolis and centrifugal forces. The latter is seen to act as a 
repulsive quadratic potential, counteracting the confinement. Depending on the size of $\Omega_1, \Omega_2, \Omega_z$ we thus might have 
de-confinement due to the combined effects of the rotation and the strong confinement. This also has an influence on the question of global existence of solutions to the NLS, see the Remark \ref{rem:ex} below.

\begin{lemma}[Vectorial Strichartz estimates]
\label{lemstri}
There exists $\delta>0$ such that the following properties hold true.\\
(i) For any admissible pair $(q,r)$, there exists $C$ such that, for all $\phi\in L^2(\RR^3)$,
\begin{equation}
\label{stri1}
\left\|U(t)\phi\right\|_{L^q_tL^r_xL^2_z}\leq C_r\|\phi\|_{L^2}\,,
\end{equation}
where $L^q_tL^r_xL^2_z$ stands for $L^q_t((-\delta,\delta),L^r_x(\RR^2,L^2_z(\RR)))$.\\
(ii) For any admissible pairs $(q,r)$ and $(\gamma,\rho)$, there exists $C$ such that, for all $f=f(t,x,z)$,
\begin{equation}
\label{stri2}
\left\|\int_{(-\delta,\delta)\cap \{s\leq t\}}U(t-s)f(s)ds\right\|_{L^q_tL^r_xL^2_z}\leq C\|f\|_{L^{\gamma'}_tL^{\rho'}_xL^2_z}\,.
\end{equation}
\end{lemma}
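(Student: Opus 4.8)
The plan is to reduce the vectorial estimates to the classical scalar Strichartz estimates for $U(t)=e^{itH}$ on $L^2(\RR^2)$ and then lift them to the $L^2_z$-valued setting. The key structural point is that the Hamiltonian $H$ in \eqref{H} acts only on the $x$-variables: $z$ enters merely as a parameter, so $U(t)\phi(\cdot,\cdot,z)=e^{itH}\big(\phi(\cdot,z)\big)$ for a.e.\ $z$. I would therefore first establish, for the two-dimensional propagator, the local-in-time dispersive bound
\[
\left\| e^{itH}f\right\|_{L^\infty(\RR^2)}\le \frac{C}{|t|}\,\|f\|_{L^1(\RR^2)},\qquad 0<|t|\le 2\delta,
\]
for some small $\delta>0$, together with the $L^2$-isometry $\|e^{itH}f\|_{L^2}=\|f\|_{L^2}$. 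From these two inputs, the abstract Keel--Tao theorem (in the form valid when the dispersive estimate holds only on a bounded time interval, decay exponent $\sigma=d/2=1$ for $d=2$, for which the only forbidden pair is the endpoint $(q,r)=(2,\infty)$ --- precisely the pair excluded by the admissibility condition $2\le r<\infty$) yields both the scalar homogeneous estimate and the scalar retarded inhomogeneous estimate on the time interval $(-\delta,\delta)$.

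To pass to the vector-valued statements I would use Minkowski's integral inequality, exploiting that for an admissible pair one has $q,r\ge 2$ while the conjugate exponents satisfy $\gamma',\rho'\le 2$. Concretely: pulling the $L^2_z$-norm to the outermost position on the left-hand side, $\|g\|_{L^q_tL^r_xL^2_z}\le \|g\|_{L^2_zL^q_tL^r_x}$ (legitimate since $q,r\ge 2$), applying the scalar estimates for each fixed $z$, and then squaring and integrating in $z$, gives \eqref{stri1}; for \eqref{stri2} the same manipulation on the left is combined with $\|f\|_{L^2_zL^{\gamma'}_tL^{\rho'}_x}\le \|f\|_{L^{\gamma'}_tL^{\rho'}_xL^2_z}$ on the right (now legitimate since $\gamma',\rho'\le 2$). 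Equivalently, one may run the Keel--Tao $TT^*$ argument directly with values in the Hilbert space $L^2_z$, using that $\|e^{itH}\|_{L^1_xL^2_z\to L^\infty_xL^2_z}\le C|t|^{-1}$ follows at once from the scalar kernel bound and Minkowski's inequality; I expect the Minkowski route to be the shortest to write down, the only mild care being the Christ--Kiselev truncation hidden in the scalar inhomogeneous estimate.

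The only genuinely nontrivial step is the dispersive estimate for the full Hamiltonian \eqref{H}, which is an anisotropic (and possibly partly repulsive) harmonic oscillator perturbed by the first-order rotation term $-\Omega_zL_z$. Here I would invoke that $H$ is the Weyl quantization of a real second-order polynomial $H(x,\xi)$, so that $e^{itH}$ is, up to a scalar phase, a metaplectic operator whose Schwartz kernel is given by a generalized Mehler formula: for $|t|$ below the first focal time of the associated classical (linear) flow the phase is a nondegenerate quadratic form and the amplitude is $O(|t|^{-1})$ as $t\to 0$, which is exactly the claimed decay. This is where the restriction to a small interval $(-\delta,\delta)$ originates --- at later times the conjugate points of the flow (as, for the pure harmonic oscillator, at integer multiples of $\pi$) destroy the dispersive decay. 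Such estimates for Schr\"odinger operators with quadratic potentials, including the rotating case, are available in the literature; see, e.g., \cite{ams} and the references therein. An alternative to the Mehler formula is to remove the quadratic part of $H$ locally in time by an Avron--Herbst/lens-type change of variables, reducing matters to the free two-dimensional Schr\"odinger group, for which the $|t|^{-1}$ dispersive bound is classical. I expect this dispersive estimate to be the main obstacle; once it is in hand, the remaining steps are routine.
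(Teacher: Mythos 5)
Your proposal is correct and follows essentially the same route as the paper: scalar local-in-time Strichartz estimates for the two-dimensional propagator $e^{itH}$ (which the paper simply imports from Kitada's construction of the fundamental solution, with the restriction to $|t|<\delta$ attributed to the point spectrum of $H$), lifted to the $L^2_z$-valued setting via Minkowski's inequality using exactly the exponent inequalities $q,r\ge 2$ and $\gamma',\rho'\le 2$ that you identify. The only cosmetic difference is that the paper performs the lifting by expanding in a Hilbert basis of $L^2_z(\RR)$ and applying Minkowski (and its reverse form for exponents below $1$) to the resulting sums, rather than working pointwise in $z$ as you do.
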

\begin{proof}
This lemma relies on the usual Strichartz estimates for the group $U(t)$ acting on functions depending only on $x$ 
and on Minkowski's inequality. The existence of Strichartz estimates for $U(t)$ thereby follows from 
the results in \cite{Ki} which can be directly applied to the Hamiltonian given in \eqref{H}. The fact that $H$ in general will have eigenvalues, implies that dispersive effects 
will only be present for small $|t|<\delta$, preventing the existence of global-in-time Strichartz estimates, cf. \cite{carles} for a more detailed discussion on this issue.

Let us introduce a Hilbert basis $(e_p)_{p\in \NN}$ of $L^2_z(\RR)$. Decomposing the function $\phi\in L^2(\RR^3)$ as $\phi(x,z)=\sum_p\phi_p(x)e_p(z)$, one obtains:

\begin{align*}
\|U(t)\phi\|_{L^q_tL^r_xL^2_z}^2&=\left\|\left\|\left\|U(t)\phi\right\|_{L^2_z}\right\|_{L^r_x}\right\|_{L^q_t}^2
=\left\|\left\|\left\|U(t)\phi\right\|_{L^2_z}^2\right\|_{L^{r/2}_x}\right\|_{L^{q/2}_t}\\
&=\left\|\left\|\sum\left|U(t)\phi_p\right|^2\right\|_{L^{r/2}_x}\right\|_{L^{q/2}_t}\\
&\leq \left\|\sum\left\|U(t)\phi_p\right\|_{L^r_x}^2\right\|_{L^{q/2}_t}\\
&\leq \sum\left\|\left\|U(t)\phi_p\right\|_{L^r_x}\right\|_{L^{q}_t}^2\\
&\leq C\sum\left\|\phi_p\right\|_{L^2_x}^2=C\|\phi\|_{L^2(\RR^3)}^2.
\end{align*}
Here, we have used twice the Minkowski's inequality \cite{ineq} in the third line (notice that $q/2\geq 1$ and $r/2\geq 1$) and the usual Strichartz estimate for each $U(t)\phi_p$ in the fourth line. This proves \fref{stri1}.

Let us prove \fref{stri2}: For $f(t,x,z)=\sum_pf_p(t,x)e_p(z)$, denoting $$g_j(t,x)=\int_{(-\delta,\delta)\cap \{s\leq t\}}U(t-s)f_j(s)ds,$$ we estimate similarly (we have again $q/2>1$ and $r/2\geq 1$)
\begin{align*}
\left\|\int_{(-\delta,\delta)\cap \{s\leq t\}}U(t-s)f(s)ds\right\|_{L^q_tL^r_xL^2_z}^2
&=\left\|\left\|\left\|\sum g_pe_p\right\|_{L^2_z}^2\right\|_{L^{r/2}_x}\right\|_{L^{q/2}_t}\\
&=\left\|\left\|\sum\left|g_p\right|^2\right\|_{L^{r/2}_x}\right\|_{L^{q/2}_t}\\
&\leq \left\|\sum\left\|g_p\right\|_{L^r_x}^2\right\|_{L^{q/2}_t}\leq \sum\left\|\left\|g_p\right\|_{L^r_x}\right\|_{L^{q}_t}^2\\
&\leq C\sum\left\|f_p\right\|_{L^{\gamma'}_tL^{\rho'}_x}^2=C\sum\left\|\left\||f_p|^2\right\|_{L^{\rho'/2}}\right\|_{L^{\gamma'/2}_t}\\
\leq C\left\|\sum\left\||f_p|^2\right\|_{L^{\rho'/2}}\right\|_{L^{\gamma'/2}_t}&\leq C\left\|\left\|\sum|f_p|^2\right\|_{L^{\rho'/2}}\right\|_{L^{\gamma'/2}_t}=C\|f\|_{L^{\gamma'}_tL^{\rho'}_xL^2_z}^2.
\end{align*}
Here, we have used, in the fourth line, the Strichartz inequality for each $g_j$ and, in the fifty line, the reverse Minkowski's inequality \cite{ineq} 
(note that we have necessarily $\gamma'/2< 1$ and $\rho'/2<1$). The proof of Lemma \ref{lemstri} is complete.
\end{proof}

\begin{proposition}
\label{prop1}
Let $\phi_0\in \Sigma^1$. Then there exists $T_{\rm max}\in (0,+\infty]$ such that \eqref{eq:phi2} admits a unique maximal solution $\phi \in C([0,T_{\rm max}),\Sigma^1)$, in the sense that 
$$\mbox{if}\quad T_{\rm max}<+\infty \quad \mbox{then}\quad  \lim_{t\to T_{\rm max}}\|\na_x\psi^\eps(t)\|_{L^2}=+\infty.$$
Moreover, the following conservation laws hold
\begin{align*}
\|\phi(t)\|_{L^2}=\|\phi_0\|_{L^2},\quad E(\phi(t))=E(\phi_0),\quad \| \calH^{1/2}_z \phi(t)\|_{L^2} =   \| \calH^{1/2}_z \phi_0\|_{L^2}.
\end{align*}
Furthermore, if $\phi_0\in \Sigma^2$, then $\phi \in C([0,T_{\rm max}),\Sigma^2)\cap C^1([0,T_{\rm max}),L^2)$.
\end{proposition}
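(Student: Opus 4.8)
The plan is to run the usual Strichartz-based fixed-point argument for $H^1$-subcritical NLS, cf.\ \cite{cazenave2003semilinear}, modified to accommodate the two features of \eqref{eq:phi2} that depart from the textbook setting: the propagator is $U(t)$, the unitary group generated by the quadratic Hamiltonian $H$ of \eqref{H}, not the free one, and dispersion is available only in the two $x$-variables, $z$ entering the linear part as a mere parameter. One starts from the Duhamel representation
\[
\phi(t)=U(t)\phi_0-i\lambda\int_0^t U(t-s)F_{\rm av}(\phi(s))\,ds,
\]
and sets up the contraction in a ball of $C([0,T],\Sigma^1)$ intersected with a Strichartz space $L^q((0,T),\mathcal W_xL^2_z)$, for some admissible $(q,r)$ and $T\le\delta$, where $\mathcal W_x$ encodes one $x$-derivative and one $x$-weight. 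Three structural facts make this work at the $\Sigma^1$ level. First, since $H$ is the Weyl quantization of a real quadratic form, $U(t)$ is metaplectic, so it preserves $\Sigma^1$ with norm bounded on compact time intervals, and conjugating $\na_x$ or $|x|$ by $U(t)$ again produces first-order operators with coefficients bounded on $[-T,T]$ (Egorov for quadratic symbols); together with the vectorial Strichartz estimates of Lemma \ref{lemstri}, applied to the $L^2_z$-valued propagator, this yields the linear estimates in $\mathcal W_x$. Second, $\calH_z$ commutes with $H$ and with the gauges $e^{\pm i\theta\calH_z}$, while $\na_x$ and $|x|$ commute with $e^{\pm i\theta\calH_z}$; hence every operator used to measure the $\Sigma^1$-norm passes through $F_{\rm av}$, and pulling the $\theta$-average out by Minkowski's inequality and using that $e^{i\theta\calH_z}$ is an isometry on every $L^p_z$-based space reduces all nonlinear estimates to the classical ones for the power $u\mapsto|u|^{2\sigma}u$. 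Third, the $z$-dispersion which is missing is replaced by the one-dimensional Sobolev embedding $\Sigma^1_z\hookrightarrow L^\infty_z$ applied to the $C_t\Sigma^1$-component of the iteration space. This gives, for $T$ depending only on $\|\phi_0\|_{\Sigma^1}$, a unique local solution, and then, by the standard continuation argument, a unique maximal solution $\phi\in C([0,T_{\rm max}),\Sigma^1)$ satisfying the basic blow-up alternative $\|\phi(t)\|_{\Sigma^1}\to\infty$ as $t\to T_{\rm max}$ when $T_{\rm max}<\infty$.

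The three conservation laws are obtained from the corresponding formal identities, justified by first proving them for $\Sigma^2$-data (or for a regularized equation) and passing to the limit by continuous dependence. Conservation of mass comes from $\IM\int_{\RR^3}\overline{\phi}\,F_{\rm av}(\phi)=0$, the classical gauge invariance transported through the $\theta$-average. Conservation of energy is the Hamiltonian structure: \eqref{eq:phi2} reads $i\pa_t\phi=\pa_{\bar\phi}E$, hence $\tfrac{d}{dt}E(\phi)=0$. Conservation of $\|\calH_z^{1/2}\phi\|_{L^2}^2$ follows from $[\calH_z,H]=0$ together with $\IM\int_{\RR^3}\overline{\calH_z\phi}\,F_{\rm av}(\phi)=0$; this last identity is the one special to the averaged model, and is seen as follows: writing $v_\theta=e^{-i\theta\calH_z}\phi$, one has $\int_{\RR^3}\overline{\calH_z\phi}\,F_{\rm av}(\phi)=\tfrac1{2\pi}\int_0^{2\pi}\!\int_{\RR^3}\overline{\calH_z v_\theta}\,|v_\theta|^{2\sigma}v_\theta$, and since $\calH_z v_\theta=i\,\pa_\theta v_\theta$, an integration by parts in $\theta$ using the $2\pi$-periodicity of $\theta\mapsto\|v_\theta\|_{L^{2\sigma+2}}^{2\sigma+2}$ shows this quantity is real.

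For the refined blow-up alternative, \eqref{eq:normeq} gives $\|\phi\|_{\Sigma^1}^2\simeq\|\phi\|_{L^2}^2+\|\calH_z^{1/2}\phi\|_{L^2}^2+\|\na_x\phi\|_{L^2}^2+\||x|\phi\|_{L^2}^2$, and since the first two terms are conserved it suffices to show that $M(t):=\||x|\phi(t)\|_{L^2}^2$ cannot blow up in finite time while $\|\na_x\phi(t)\|_{L^2}$ stays bounded. As $|x|^2$ commutes with the potential, with $L_z$ (rotations in the $x$-plane preserve $|x|^2$), and with $e^{\pm i\theta\calH_z}$ (so that $\IM\int_{\RR^3}\overline{|x|^2\phi}\,F_{\rm av}(\phi)=0$), the only contribution to $\tfrac{d}{dt}M$ comes from the first-order commutator $[-\tfrac12\Delta_x,|x|^2]$, giving $|M'(t)|\lesssim\|x\phi(t)\|_{L^2}\|\na_x\phi(t)\|_{L^2}$, a differential inequality for $\sqrt M$ that rules out finite-time blow-up of $M$; combined with the basic alternative this yields the stated one. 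Persistence of $\Sigma^2$-regularity is then comparatively easy: $\Sigma^2(\RR^3)$ is an algebra ($2>3/2$), $F_{\rm av}$ is locally Lipschitz from $\Sigma^2$ to itself (again by Minkowski and the isometry property of $e^{i\theta\calH_z}$), so applying the second-order operators defining $\|\cdot\|_{\Sigma^2}$ to the Duhamel formula and closing a Gronwall estimate on each compact subinterval of $[0,T_{\rm max})$ gives $\phi\in C([0,T_{\rm max}),\Sigma^2)$, after which $\pa_t\phi=-i(H\phi+\lambda F_{\rm av}(\phi))\in C([0,T_{\rm max}),L^2)$ is read off the equation.

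The step I expect to be the main obstacle is the local $\Sigma^1$-theory: because dispersion is present only in the two $x$-variables while the subcriticality threshold $\sigma<2$ is the \emph{three}-dimensional energy-critical exponent, the nonlinear estimates have to be carried out in the mixed $L^q_tL^r_xL^2_z$ norms of Lemma \ref{lemstri}, trading the missing $z$-dispersion against the $z$-localization carried by the $\Sigma^1$-norm, and this balance becomes delicate as $\sigma\uparrow2$; the accompanying difficulty is establishing the $\Sigma^1$-level Strichartz and mapping properties of the quadratic Hamiltonian $H$, for which the (short-time only, whence the restriction $|t|<\delta$) dispersive estimates of \cite{Ki} are the essential input.
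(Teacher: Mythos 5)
Your proposal follows essentially the same route as the paper: a fixed point for the Duhamel formula in mixed $L^q_tL^r_xL^2_z$ Strichartz spaces built on $U(t)$ (Lemma \ref{lemstri}), with the missing $z$-dispersion compensated by anisotropic Sobolev embeddings, the commutator structure of $H$ with $x$, $\nabla_x$, $z$, $\partial_z$ closing the first-order system, the same three conservation identities (your $\theta$-integration by parts for $\IM\langle F_{\rm av}(\phi),\calH_z\phi\rangle=0$ is a correct, more explicit version of the paper's terse commutation argument), and the same virial-type bound on $\||x|\phi\|_{L^2}$ to refine the blow-up alternative. The one place where you underestimate the work is the $\Sigma^2$-persistence up to $T_{\rm max}$, which you call comparatively easy but which is the paper's longest step, since $\|\phi\|_{L^\infty}$ is not controlled by $\Sigma^1$ in three dimensions and one must rerun the mixed-norm Strichartz machinery on $\partial_t\phi$, $|x|^2\phi$ and $\calH_z\phi$ with tame estimates of the form $\|A F_{\rm av}(\phi)\|_{L^{\rho'}_xL^2_z}\lesssim\|\phi\|_{\Sigma^1}^{2\sigma}\|A\phi\|_{L^r_xL^2_z}$ to close a genuinely linear Gronwall inequality.
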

Note that for $\phi(t)\in \Sigma^2$ we have the strong form of the conservation law, i.e.,
\[
\langle \calH_z \phi(t) , \phi(t) \rangle_{L^2} = \langle \calH_z \phi_0 , \phi_0 \rangle_{L^2} .
\]
\begin{proof}

\ni
{\em Step 1: uniqueness in $\Sigma^1$.} Let $u\in L^\infty((0,T),\Sigma^1)$, $\widetilde u\in L^\infty((0,T),\Sigma^1)$ be two weak solutions of \eqref{eq:phi2}, 
where, without loss of generality, we can assume that $0<T\leq \delta$. From \eqref{eq:phi2}, it follows that
$$
(u-\widetilde u)(t)=i \lambda \int_0^tU(t-s)\left(F_{\rm av}(u)-F_{\rm av}(\widetilde u)\right)ds
$$
and then, by \eqref{stri2}, for any admissible pairs $(q,r)$ and $(\gamma,\rho)$,
$$
\|u-\widetilde u\|_{L^q_tL^r_xL^2_z}\leq C \left\|F_{\rm av}(u)-F_{\rm av}(\widetilde u)\right\|_{L^{\gamma'}_tL^{\rho'}_xL^2_z}\,.
$$
Let us choose $r> 2$ and $\rho>2$ such that $\frac{1}{r}+\frac{1}{\rho}=1-\frac{\sigma}{2}$, where we recall that $1\le \sigma <2$. Then, denoting $v=e^{i\theta \calH_z}u$ and $\widetilde v=e^{i\theta \calH_z}\widetilde u$, one gets by H\"older,
\begin{align*}
\|F_{\rm av}(u)-F_{\rm av}(\widetilde u)\|_{L^{\rho'}_xL^2_z}&\leq \frac{1}{2\pi}\int_0^{2\pi}\left\||v|^{2\sigma}v-|\widetilde v|^{2\sigma}\widetilde v\right\|_{L^{\rho'}_xL^2_z}d\theta\\
&\leq C \int_0^{2\pi}\left\|(|v|^{2\sigma}+|\widetilde v|^{2\sigma})|v-\widetilde v|\right\|_{L^{\rho'}_xL^2_z}d\theta\\
&\leq C \int_0^{2\pi}(\|v\|_{L^4_xL^\infty_z}^{2\sigma}+\|\widetilde v\|_{L^4_xL^\infty_z}^{2\sigma})\|v-\widetilde v\|_{L^{r}_xL^2_z}d\theta\\
&\leq C \int_0^{2\pi}(\|v\|_{\Sigma^1}^{2\sigma}+\|\widetilde v\|_{\Sigma^1}^{2\sigma})\|v-\widetilde v\|_{L^{r}_xL^2_z}d\theta\\
& = C\big(\|u\|_{\Sigma^1}^{2\sigma}+\|\widetilde u\|_{\Sigma^1}^{2\sigma}\big)\|u-\widetilde u\|_{L^{r}_xL^2_z} \, .
\end{align*}
Here, we have used the unitarity of $e^{i\theta \calH_z}$ in $L^2_z$ and $\Sigma^1$, and the embeddings $H^1(\RR^3)\hookrightarrow L^4_x(\RR^2,L^\infty_z(\RR))$ 
and $H^1(\RR^3)\hookrightarrow L^{r}_x(\RR^2,L^2_z(\RR))$ (see the Appendix of \cite{ben2005nonlinear}).
Since $u$ and $\widetilde u$ belong to $L^\infty((0,T),\Sigma^1)$, this yields
$$
\|u-\widetilde u\|_{L^q_tL^r_xL^2_z}\leq C\|u-\widetilde u\|_{L^{\gamma'}_tL^{r}_xL^2_z}\,.
$$
Since $\gamma'< 2< q$, this inequality is enough to conclude that $u=\widetilde u$, see Lemma 4.2.2 in \cite{cazenave2003semilinear}.

\bs
\ni
{\em Step 2: local existence.} Let us adapt to \eqref{eq:phi2} the proof of well-posedness of NLS with a quadratic potential of \cite{carles}, using Kato's strategy (see for instance \cite{cazenave2003semilinear})  and the vectorial Strichartz estimates given in Lemma \ref{lemstri}. 
The main technical difficulty here is the fact that we are working with an NLS in three spatial dimensions, but we can only utilize the dispersive properties of the two-dimensional Schr\"odinger group $U(t) = e^{itH}$. 
In order to remedy this, an important ingredient will be the anisotropic Sobolev imbeddings proved in \cite{ben2005nonlinear}.\\

With this in mind, local in-time existence for solutions in $\phi(t)\in \Sigma^1$ can be proved by means of a fixed point theorem applied to Duhamel's representation of \eqref{eq:phi}, i.e.,
\begin{equation*}
\phi(t)=U(t)\phi_0-i \lambda \int_0^t U(t-s)F_{\rm av}(\phi)(s)ds=: \Phi(\phi)(t),
\end{equation*}
where, as before, $U(t) = e^{it H}$ and $H$ is given by \eqref{H}. 
We want to show that for $\phi_0  \in \Sigma^1$ and sufficiently small $T>0$, $\Phi$ is a contraction mapping in the complete metric space
\begin{align*}
X_{T,M}= \big \{&\psi\in C([0,T]; \Sigma^1) \ : \  \psi , \bx \psi, \nabla \psi \in L_t^q([0, T]; L^r_x(\RR^2; L^2_z(\RR))), \\
&\|\psi\|_{L^\infty_t(\Sigma^1)}+\|\psi\|_{L^q_tL^r_xL^2_z}+\|\bx \psi\|_{L^q_tL^r_xL^2_z}+\|\nabla \psi\|_{L^q_tL^r_xL^2_z}\leq M\big \},
\end{align*}
equipped with the distance
$$\d(u,v)=\|u-v\|_{L^\infty_tL^2_xL^2_z}+\|u-v\|_{L^q_tL^r_xL^2_z}.$$
The real numbers $r,q,\rho, \gamma$ are taken as in Step 1 above and $T$, $M$ are to be chosen later. To prove that $X_{T,M}$ is stable by $\Phi$, one first checks that the commutator
\[
[\partial_z, H] = [ z, H] = 0,
\]
whereas
\[
[x, H] = \nabla_x - i \Omega_z x^\perp, 
\]
with $x^\perp = (x_1 , -x_2)$. Similarly, we find
\begin{align*}
[\nabla_x, H] = \nabla_x V + i \Omega_z \nabla_x^\perp
\end{align*}
where $V$ is as in \eqref{eq:pot}, and hence $\nabla_x V$ is in fact linear in $x$. We consequently obtain that the combination of 
$\phi$, $\bx \phi$  and $ \nabla \phi $ form a closed coupled system of equations. 
Therefore, by applying the operators $\bx$ and $\nabla$ to \eqref{eq:phi2} and by using Lemma \ref{lemstri}, we obtain for all $\phi\in X_{T,M}$,
\begin{align*}
&\|\Phi(\phi) \|_{L^\infty_t(\Sigma^1)}+\|\Phi(\phi) \|_{L^q_tL^r_xL^2_z}+\|\bx \Phi(\phi) \|_{L^q_tL^r_xL^2_z}+\|\nabla \Phi(\phi) \|_{L^q_tL^r_xL^2_z}\\
&\quad \lesssim  \|\phi_0 \|_{L^2}+\|\bx \phi_0 \|_{L^2}+\|\nabla \phi_0 \|_{L^2}+\|x \phi\|_{L^1_tL^2_xL^2_z}+\|\nabla_x \phi\|_{L^1_tL^2_xL^2_z}\\
&\qquad \quad+\left\| F_{\rm av}(\phi ) \right\|_{L^{\gamma'}_tL^{\rho'}_xL^2_z}+\left\| \bx F_{\rm av}(\phi ) \right\|_{L^{\gamma'}_tL^{\rho'}_xL^2_z}+\left\| \na F_{\rm av}(\phi ) \right\|_{L^{\gamma'}_tL^{\rho'}_xL^2_z}\,,\\
&\quad \lesssim  \|\phi_0 \|_{\Sigma^1}+TM+\left\| F_{\rm av}(\phi ) \right\|_{L^{\gamma'}_tL^{\rho'}_xL^2_z}+\left\| \bx F_{\rm av}(\phi ) \right\|_{L^{\gamma'}_tL^{\rho'}_xL^2_z}+\left\| \na F_{\rm av}(\phi ) \right\|_{L^{\gamma'}_tL^{\rho'}_xL^2_z}
\end{align*}
where $a\lesssim b$ stands for $a\leq Cb$ for some constant $C>0$.
Denoting $v=e^{i\theta \calH_z}\phi$, one gets
\begin{align*}
\|\nabla_x F_{\rm av}(\phi)\|_{L^{\rho'}_xL^2_z}&\leq \frac{1}{2\pi}\int_0^{2\pi}\left\||v|^{2\sigma}\nabla_x v\right\|_{L^{\rho'}_xL^2_z}d\theta\\
&\lesssim \int_0^{2\pi}\|v\|_{L^4_xL^\infty_z}^{2\sigma}\|\nabla_x v\|_{L^{r}_xL^2_z}d\theta\\
&\lesssim  \int_0^{2\pi}\|v\|_{\Sigma^1}^{2\sigma}\|\nabla_x v\|_{L^{r}_xL^2_z}d\theta=2\pi\|\phi\|_{\Sigma^1}^{2\sigma}\|\nabla_x \phi\|_{L^{r}_xL^2_z}\end{align*}
where we have used the fact that $e^{i\theta \calH_z}$ is unitary in $L^2_z$ and $\Sigma^1$, together with a H\"older estimate and the embedding $H^1(\RR^3)\hookrightarrow L^4_x(\RR^2,L^\infty_z(\RR))$.
Hence,
\begin{align*}
\|\nabla_x F_{\rm av}(\phi)\|_{L^{\gamma'}_tL^{\rho'}_xL^2_z}\lesssim M^{2\sigma}\|\nabla_x \phi\|_{L^{\gamma'}_tL^{r}_xL^2_z}
&\leq T^{\frac{q-\gamma'}{q\gamma'}}M^{2\sigma}\|\nabla_x \phi\|_{L^{q}_tL^{r}_xL^2_z}\\
&\leq T^{\frac{q-\gamma'}{q\gamma'}}M^{2\sigma+1}.
\end{align*}
Similarly, we obtain
$$\|x F_{\rm av}(u)\|_{L^{\gamma'}_tL^{\rho'}_xL^2_z}\lesssim  T^{\frac{q-\gamma'}{q\gamma'}}M^{2\sigma+1}.
$$
To estimate $z F_{\rm av}(u)$ and $\pa_z F_{\rm av}(u)$, we use \eqref{eq:normeq} several times:
\begin{align*}
&\|z F_{\rm av}(\phi)\|_{L^{\rho'}_xL^2_z}+\|\pa_z F_{\rm av}(\phi)\|_{L^{\rho'}_xL^2_z}\\
&\qquad\lesssim \left\|\mathcal H_z^{1/2}\int_0^{2\pi}e^{i\theta \calH_z}\left(|v|^{2\sigma}v\right)d\theta\right\|_{L^{\rho'}_xL^2_z}
= \left\|\int_0^{2\pi}e^{i\theta \calH_z}\mathcal H_z^{1/2}\left(|v|^{2\sigma}v\right)d\theta\right\|_{L^{\rho'}_xL^2_z}\\
&\qquad \lesssim \int_0^{2\pi}\left\|\mathcal H_z^{1/2}\left(|v|^{2\sigma}v\right)\right\|_{L^{\rho'}_xL^2_z}d\theta
\lesssim\int_0^{2\pi}\left\||v|^{2\sigma}\left(|zv|+|\pa_zv|\right)\right\|_{L^{\rho'}_xL^2_z}d\theta\\
&\qquad \lesssim \int_0^{2\pi}\|v\|_{L^4_xL^\infty_z}^{2\sigma}\left(\|z v\|_{L^{r}_xL^2_z}+\|\pa_zv\|_{L^{r}_xL^2_z}\right)d\theta\\
&\qquad \lesssim \int_0^{2\pi}\|v\|_{\Sigma^1}^{2\sigma}\|\mathcal H_z^{1/2}v\|_{L^{r}_xL^2_z}d\theta
=2\pi\|\phi\|_{\Sigma^1}^{2\sigma}\|\mathcal H_z^{1/2}\phi\|_{L^{r}_xL^2_z}\\
&\qquad \lesssim \|\phi\|_{\Sigma^1}^{2\sigma}\left(\|z\phi\|_{L^{r}_xL^2_z}+\|\pa_z\phi\|_{L^{r}_xL^2_z}\right),
\end{align*}
which yields, again
$$\|z F_{\rm av}(\phi)\|_{L^{\gamma'}_tL^{\rho'}_xL^2_z}+\|\pa_z F_{\rm av}(\phi)\|_{L^{\gamma'}_tL^{\rho'}_xL^2_z}\lesssim  T^{\frac{q-\gamma'}{q\gamma'}}M^{2\sigma+1}.
$$
Finally, we have proved that
\begin{align*}
&\|\Phi(\phi) \|_{L^\infty_t(\Sigma^1)}+\|\Phi(\phi) \|_{L^q_tL^r_xL^2_z}+\|\bx \Phi(\phi) \|_{L^q_tL^r_xL^2_z}+\|\nabla \Phi(\phi) \|_{L^q_tL^r_xL^2_z}\\
&\quad \leq C\|\phi_0\|_{\Sigma^1}+CTM+CT^{\frac{q-\gamma'}{q\gamma'}}M^{2\sigma+1}.
\end{align*}
We now set 
$$M=2C\|\phi_0\|_{\Sigma^1}$$
and choose $T$ small enough so that
$$CTM+CT^{\frac{q-\gamma'}{q\gamma'}}M^{2\sigma+1}\leq \frac{M}{2}.$$
It follows that $\Phi(\phi)\in X_{T,M}$. The contraction property can then be proved by following the same lines as the proof of uniqueness in Step 1: it can be proved that, for $T$ small enough, we have
$$\d(\Phi(\phi),\Phi(\widetilde \phi))\leq \frac{1}{2}\d(\phi,\widetilde\phi)$$
for all $\phi, \widetilde \phi\in X_{T,M}$.
Hence, by Banach's fixed point theorem, $\Phi$ has a unique fixed point, which is a mild solution of \eqref{eq:phi2}.

\bs
\ni
{\em Step 3: blow-up alternative.}  From the uniqueness result and from the fact that the existence time in Step 1 only depends on $\|\phi_0\|_{\Sigma^1}$, one can define the maximal solution $\phi \in C([0,T_{\rm max}),\Sigma^1)$ and obtain a first blow-up alternative in terms of the whole $\Sigma^1$ norm: 
$$\mbox{if}\quad T_{\rm max}<+\infty \quad \mbox{then}\quad  \lim_{t\to T_{\rm max}}\|\phi(t)\|_{\Sigma^1}=+\infty.$$
Then, we compute from \eqref{eq:phi2}
$$\frac{d}{dt}\|x\phi(t)\|_{L^2}^2=2\IM \int_{\RR^3}x\cdot\nabla_x\phi(t,x) \,\overline\phi(t,x) \, dxdz\leq \|x\phi(t)\|_{L^2}^2+\|\nabla_x \phi(t)\|_{L^2}^2.$$
Hence, a bound on $\|\nabla_x \phi\|_{L^2}$ yields a bound on $\|x\phi(t)\|_{L^2}$ by the Gronwall lemma. Since the $L^2$ norm of $\phi$ is conserved, it is clear that $\lim_{t\to T_{\rm max}}\|\phi(t)\|_{\Sigma^1}=+\infty$ implies that $\lim_{t\to T_{\rm max}}\|\nabla \phi(t)\|_{L^2}=+\infty$. We have proved the blow-up alternative as it is stated in the Proposition.

\bs
\ni
{\em Step 4: conservation laws.} In order to prove the conservation laws stated above, it is enough to consider the case of local-in-time solutions $\phi(t)$ which are sufficiently smooth and decaying. 
By following a standard regularization procedure (as given in, e.g., \cite{cazenave2003semilinear}), 
this can then be extended to general $\phi(t) \in \Sigma^1$. Conservation of mass then follows from the fact that $H$ is self-adjoint and hence
\begin{align*}
\frac{d}{dt} \| \phi(t)\|_{L^2}^2 = \RE \left(\frac{2}{i} \langle H\phi + \lambda F_{\rm av} (\phi), \phi )\rangle_{L^2} \right) = 2 \lambda \IM  \langle   F_{\rm av} (\phi), \phi )\rangle_{L^2} .
\end{align*}
However, 
\begin{align*}
\IM  \langle F_{\rm av} (\phi), \phi )\rangle_{L^2} & =\frac{1}{2\pi} \IM \left \langle \int_0^{2\pi}e^{i\theta \calH_z}\left(\left|e^{-i\theta \calH_z}\phi \right|^{2\sigma}e^{-i\theta \calH_z}\phi \right)d\theta , \phi \right \rangle_{L^2}\\
& =\frac{1}{2\pi} \int_0^{2\pi}  \IM \left \langle \left|e^{-i\theta \calH_z}\phi \right|^{2\sigma}e^{-i\theta \calH_z} \phi  , e^{-i\theta \calH_z}\phi \right \rangle_{L^2} d\theta= 0,
\end{align*}
which implies $ \| \phi(t)\|_{L^2}^2 =  \| \phi_0\|_{L^2}^2$. 

A similar argument can be used to prove that $\| \calH^{1/2}_z \phi(t)\|_{L^2} =   \| \calH^{1/2}_z \phi_0\|_{L^2}$, having in mind that both 
$\calH_z$ and $H$ are self-adjoint, and that  the following commutation relations hold: $[ \calH^{1/2}_z, H ] = 0$ as well as $[\calH^{1/2}_z, e^{i\theta \calH_z}]=0$, see \cite{bcm}.

Finally, in order to prove the conservation of the energy it is useful to first note that $E(\phi)$ is formally equal to
\[
E(\phi) = \langle H \phi, \phi\rangle_{L^2} + \lambda  \int_{\RR^3} N_{\rm av} (\phi) dx\, dz ,
\]
where
\[
N_{\rm av}(\phi):= \,  \frac{1}{2\pi(\sigma +1)} \int_0^{2\pi}  \left | e^{- i \theta \calH_z} \phi \right |^{2\sigma +2}  d\theta .
\]
We then compute 
\begin{align*}
\frac{d}{dt} \langle H \phi, \phi\rangle_{L^2} & \, = 2 \IM \langle F_{\rm av} (\phi), H \phi\rangle_{L^2}  = 2 \IM \langle F_{\rm av} (\phi), i \partial_ t  \phi- \lambda F_{\rm av}(\phi) 
\rangle_{L^2} \\
& \, = - 2 \lambda \RE \langle F_{\rm av} (\phi), \partial_t \phi\rangle_{L^2} 
\end{align*}
From here, it follows that $E(\phi)$ is conserved provided that 
\[
2 \RE \langle F_{\rm av} (\phi), \partial_t \phi\rangle_{L^2}  =   \frac{d}{dt}  \int_{\RR^3} N_{\rm av} (\phi) dx\, dz.
\]
This, however, can be seen by a direct computation, using the definition of $F_{\rm av}$ and $N_{\rm av}$, i.e.,
\begin{align*}
 \RE \langle F_{\rm av} (\phi), \partial_t \phi\rangle_{L^2} = & \, \frac{1}{2\pi} \int_0^{2 \pi} \RE \left \langle \left| e^{-i\theta \calH_z}\phi \right|^{2\sigma}e^{-i\theta \calH_z} \phi  ,  
  \partial_t \left(e^{-i\theta \calH_z}\phi\right) \right \rangle_{L^2} d\theta \\
  = &\,  \frac{1}{4 \pi} \int_0^{2 \pi} \int_{\RR^3}  \left| e^{-i\theta \calH_z}\phi \right|^{2\sigma} 
  \partial_t \left | e^{-i\theta \calH_z}\phi\right|^{2} d\theta \, dx \, dz \\
  = & \, \frac{1}{4\pi (\sigma+1)} \frac{d}{dt}  \int_0^{2 \pi} \int_{\RR^3}  \left| e^{-i\theta \calH_z}\phi \right|^{2\sigma+2} d\theta \, dx \, dz.
 \end{align*}
 
\ni
{\em Step 5: $\Sigma^2$ regularity.} Assume that $\phi_0\in \Sigma^2$. 
Since $\Sigma^2$ is an algebra and $e^{i\theta \calH_z}$ is unitary on $\Sigma^2$, it is easy to see that 
$F_{\rm av}$ is locally Lipschitz continuous on $\Sigma^2$ provided $\sigma \ge 1$. 
Hence, by a standard fixed-point technique, we can show the existence of a unique maximal solution $\phi \in C([0,T_1),\Sigma^2)\cap C^1([0,T_1),L^2)$, with $0<T_1\leq T_{\rm max}$. Let us prove that $T_1=T_{\rm max}$ by contradiction. To this end, we assume that $T_1<T_{\rm max}$ and consequently deduce that
$$ \lim_{t\to T_1}\|\phi(t)\|_{\Sigma^2}=+\infty\quad \mbox{and}\quad \sup_{t\in[0,T_1]}\|\phi(t)\|_{\Sigma^1}<+\infty.$$
Therefore, it suffices to find $\tau <T_1$ such that $\|\phi(t)\|_{\Sigma^2}$ is bounded on the interval $(\tau,T_1)$ to have the desired contradiction. 
Let $\tau\geq 0$, to be fixed later, be such that $0<T_1-\tau<\delta$ ($\delta$ is defined in Lemma \ref{lemstri}). By differentiating \eqref{eq:phi2} with respect to time, we obtain that
$$\partial_t \phi(t)=U(t)\partial_t \phi(\tau)-i\lambda\int_\tau^t U(t-s)\partial_t F_{\rm av}(\phi)(s)ds.$$
Denoting $\psi=e^{-i\theta \calH_z}\phi$, we compute now
$$
\partial_t F_{\rm av}(\phi)=
\frac{\sigma+1}{2\pi}\int_0^{2\pi}e^{i\theta \calH_z}\left(\left|\psi\right|^{2\sigma}\pa_t\psi\right)d\theta+\frac{\sigma}{2\pi}\int_0^{2\pi}e^{i\theta \calH_z}\left(\left|\psi\right|^{2\sigma-2}\psi^2\pa_t\overline\psi\right)d\theta
$$
and, using the same admissible pairs $(q,r)$ and $(\gamma,\rho)$ as in Step 1, we estimate similarly
$$
\|\partial_t F_{\rm av}(\phi)\|_{L^{\rho'}_xL^2_z}\leq C\|\phi\|_{\Sigma^1}^{2\sigma}\|\partial_t \phi\|_{L^{r}_xL^2_z}.
$$
Using the Strichartz estimates \eqref{stri1} and \eqref{stri2}, this yields
$$\|\pa_t\phi\|_{L^q_tL^r_xL^2_z}\leq C\|\pa_t\phi(\tau)\|_{L^2}+C\|\pa_t\phi\|_{L^{\gamma'}_tL^{r}_xL^2_z}\,$$
where the time integral is computed on the interval $I=(T_1-\tau,T_1)$. Since $\gamma'<q$, if $T_1-\tau$ is small enough, this implies
$$
\|\pa_t\phi\|_{L^q_tL^r_xL^2_z}\leq C\|\pa_t\phi(\tau)\|_{L^2}
$$
and, applying again the Strichartz estimates, we get
$$\|\pa_t\phi(t)\|_{L^\infty_tL^2_{x,z}}\leq C\|\pa_t\phi(\tau)\|_{L^2}\leq C\|\phi(\tau)\|_{\Sigma^2}.$$
For the last inequality, we used \eqref{eq:phi2} at $t=\tau$ and the following estimate deduced from a Sobolev inequality and from interpolation inequality
\begin{equation}
\label{estiFav}
\|F_{\rm av}(u)\|_{L^2}\leq C\int_0^{2\pi}\|e^{-i\theta \calH_z}u\|_{L^{4\sigma+2}}^{2\sigma+1}d\theta\leq C\|u\|_{H^s}^{2\sigma+1}\leq C\|u\|_{H^2}^{\alpha}\|u\|_{H^1}^{2\sigma+1-\alpha},
\end{equation}
with $s=\frac{3\sigma}{2\sigma+1}$ and $\alpha=\max(\sigma-1,0)<1$.

Now, we remark that, by integrations by parts, a direct calculation gives
$$\|L_z\phi\|^2=-\langle x_1^2\phi,\pa_{x_2}^2\phi\rangle-\langle x_2^2\phi,\pa_{x_1}^2\phi\rangle +2\RE \langle x_1x_2\phi,\pa_{x_1}\pa_{x_2}\phi \rangle-2\|\phi\|_{L^2}^2$$
so, using directly \eqref{eq:phi2}, we estimate
\begin{align*}
&\|\Delta_x\phi\|_{L^2}\leq 2\|\pa_t\phi\|_{L^2}+C\||x|^2\phi\|_{L^2}+C\|L_z\phi\|_{L^2}+\|F_{\rm av}(\phi)\|_{L^2}\\
&\; \leq 2\|\pa_t\phi\|_{L^2}+C\||x|^2\phi\|_{L^2}+C\|\Delta_x \phi\|_{L^2}^{1/2}\||x|^2\phi\|_{L^2}^{1/2}+C\|\phi\|_{L^2}+C\|\phi\|_{H^2}^{\alpha}\|\phi\|_{H^1}^{2\sigma+1-\alpha}
\end{align*}
and then, using the bounds of $\|\phi\|_{\Sigma^1}$ and $\|\pa_t\phi\|_{L^2}$, we deduce that, for all $t\in I$
\begin{equation}
\label{esti1}
\|\Delta_x\phi(t)\|_{L^2}\leq C+C\|\phi(\tau)\|_{\Sigma^2}+C\||x|^2\phi(t)\|_{L^2}.
\end{equation}

Next we can proceed similarly as above to estimate $\||x|^2\phi(t)\|_{L^2}$. We have
$$|x|^2\phi(t)=U(t)(|x|^2\phi(\tau))+\lambda\int_\tau^t U(t-s)\left(2\Delta_x \phi+2x\cdot \na_x\phi+|x|^2F_{\rm av}(\phi)\right)(s)ds.$$
Hence, using that
$$
\||x|^2F_{\rm av}(\phi)\|_{L^{\rho'}_xL^2_z}\leq C\|\phi\|_{\Sigma^1}^{2\sigma}\||x|^2 \phi\|_{L^{r}_xL^2_z}
$$
and that, by \eqref{esti1} and \eqref{equivnorm},
$$\|\Delta_x \phi+x\cdot \na_x\phi\|_{L^2}\leq C+C\|\phi(\tau)\|_{\Sigma^2}+C\||x|^2\phi\|_{L^2_xL^2_z},$$
we get by Strichartz estimates
$$\||x|^2\phi\|_{L^\infty_tL^2_xL^2_z}+\||x|^2\phi\|_{L^q_tL^r_xL^2_z}\leq C\Big(1+\|\phi(\tau)\|_{\Sigma^2}+\||x|^2\phi\|_{L^1_tL^2_xL^2_z}+\||x|^2\phi\|_{L^{\gamma'}_tL^r_xL^2_z}\Big).$$
From $\gamma'<q$, it is easy to conclude that, for $T_1-\tau$ small enough,
\begin{equation}
\label{esti2}
\||x|^2\phi\|_{L^\infty_tL^2_xL^2_z}+\||x|^2\phi\|_{L^q_tL^r_xL^2_z}\leq C+C\|\phi(\tau)\|_{\Sigma^2}
\end{equation}
and, with \eqref{esti1}, that
\begin{equation}
\label{esti3}
\|\Delta_x\phi\|_{L^\infty_tL^2_{x,z}}\leq C+C\|\phi(\tau)\|_{\Sigma^2}.
\end{equation}

Finally, consider the equation satisfied by $\calH_z\phi$. We have
$$\calH_z\phi(t)=U(t)(\calH_z\phi(\tau))+\lambda\int_\tau^t U(t-s)\calH_z F_{\rm av}(\phi)(s)ds$$
and, denoting again $\psi=e^{-i\theta \calH_z}\phi$, one computes
\begin{align*}
&\|\calH_z F_{\rm av}(\phi)\|_{L^2_z}
\leq C\int_0^{2\pi}\left\||\psi|^{2\sigma}|\pa^2_{z}\psi|+|\psi|^{2\sigma-1}|\pa_{z}\psi|^2+|z|^2|\psi|^{2\sigma+1}\right\|_{L^2_z}d\theta\\
&\quad \leq C\int_0^{2\pi}\left(\|\psi\|_{L^\infty_z}^{2\sigma}\|\pa^2_{z}\psi\|_{L^2_z}+\|\psi\|_{L^\infty_z}^{2\sigma-1}\|\pa_{z}\psi\|_{L^4_z}^2+\|\psi\|_{L^\infty_z}^{2\sigma}\||z|^2\psi\|_{L^2_z}\right)d\theta\\
&\quad \leq C\int_0^{2\pi}\|\psi\|_{L^\infty_z}^{2\sigma}\|\calH_z \psi\|_{L^2_z}d\theta,
\end{align*}
where we used the following Gagliardo-Nirenberg inequality in dimension 1:
$$\|\pa_zu\|_{L^4}\leq C\|u\|_{H^2}^{1/2}\|u\|_{L^\infty}^{1/2}\leq  C\|\calH_z u\|_{L^2}^{1/2}\|u\|_{L^\infty}^{1/2}.$$
Hence, as above, we get
$$
\|\calH_z F_{\rm av}(\phi)\|_{L^{\rho'}_xL^2_z}\leq C\int_0^{2\pi}\|\psi\|_{\Sigma^1}^{2\sigma}\|\calH_z  \psi\|_{L^{r}_xL^2_z}d\theta= C\|\phi\|_{\Sigma^1}^{2\sigma}\|\calH_z  \phi\|_{L^{r}_xL^2_z},
$$
which enables to conclude again with Strichartz inequalities that, for $T_1-\tau$ small enough,
\begin{equation}
\label{esti4}
\|\calH_z \phi\|_{L^\infty_tL^2_xL^2_z}+\|\calH_z \phi\|_{L^q_tL^r_xL^2_z}\leq C+C\|\phi(\tau)\|_{\Sigma^2}.
\end{equation}
From \eqref{esti2}, \eqref{esti3} and \eqref{esti4}, we deduce that $\|\phi(t)\|_{\Sigma^2}$ is uniformly bounded on the interval $I=(\tau,T_1)$: the proof is complete.
\end{proof}

\begin{remark}\label{rem:ex}
There are certainly situations for which $T_{\rm max} = +\infty$. In view of the discussion at the beginning of Section \ref{sec:cauchyav}, 
this will be true, in particular, if $\Omega_1^2+\Omega_2^2<1$ and the nonlinearity is defocusing $\lambda >0$, since in this case, the results given in \cite{ams} apply.  
However, if the effective centrifugal force is too big, the resulting repulsive quadratic potential requires particular techniques, see \cite{carles2} which would need to be combined with 
the effect of the rotation term. 
\end{remark}

\subsection{The Cauchy problem for the NLS equation in 3D} 

Before we can proceed to the proof of convergence for solutions $\psi^\eps$ of \eqref{eq:psi} as $\eps \to 0$, we need, as a final preparatory step, a suitable existence 
result for the three-dimensional Cauchy problem \eqref{eq:psi}

\begin{proposition}
\label{prop2}
Let $\psi_0\in \Sigma^1$. Then, for any fixed $\eps>0$, there exists $T_1^\eps \in (0,+\infty]$ such that \eqref{eq:psi} admits a unique maximal solution $\psi^\eps \in C([0,T_1^\eps),\Sigma^1)$, i.e., 
$$\mbox{if}\quad T_1^\eps<+\infty \quad \mbox{then}\quad  \lim_{t\to T_1^\eps}\|\na\psi^\eps(t)\|_{L^2}=+\infty.$$
Furthermore, if $\psi_0\in \Sigma^s$, $s>1$, then $\psi^\eps \in C([0,T_1^\eps),\Sigma^s)\cap C^1([0,T_1^\eps),L^2)$.

\end{proposition}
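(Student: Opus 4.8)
The plan is to follow the classical Kato scheme for nonlinear Schr\"odinger equations with quadratic confinement, along the lines of \cite{ams,carles}; the only point requiring a little care is the presence in \eqref{eq:psi} of the three additional first-order terms $-\frac{i}{\eps}(\Omega_2 x_1-\Omega_1 x_2)\pa_z$, $-\Omega_z L_z$ and $-i\eps z(\Omega_1\pa_{x_2}-\Omega_2\pa_{x_1})$. First I would absorb these, together with the principal part, into the single linear operator
\[
H^\eps:=\frac{1}{\eps^2}\calH_z+\calH_x-\Omega_z L_z-\frac{i}{\eps}(\Omega_2 x_1-\Omega_1 x_2)\pa_z-i\eps z(\Omega_1\pa_{x_2}-\Omega_2\pa_{x_1}),
\]
which is the Weyl quantization of a real-valued second-order polynomial on $\RR^3\times\RR^3$ whose potential part $\frac{z^2}{2\eps^2}+\frac{|x|^2}{2}$ is nonnegative and coercive. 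By \cite{Ki}, $H^\eps$ is then essentially self-adjoint on $C_0^\infty(\RR^3)$, the unitary group $U^\eps(t)=e^{-itH^\eps}$ leaves every $\Sigma^s$ invariant, and it satisfies the usual \emph{local}-in-time Strichartz estimates in dimension three, i.e.\ for admissible pairs with $\frac2q+\frac3r=\frac32$ (as in Lemma \ref{lemstri}, the point spectrum of $H^\eps$ precludes global-in-time estimates).

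Armed with these estimates, I would obtain local existence and uniqueness in $\Sigma^1$ by a contraction-mapping argument for the Duhamel map
\[
\Phi^\eps(\psi)(t):=U^\eps(t)\psi_0-i\lambda\int_0^t U^\eps(t-s)\big(|\psi|^{2\sigma}\psi\big)(s)\,ds
\]
on a ball of a space of the type $X_{T,M}$ from the proof of Proposition \ref{prop1}, now built on three-dimensional Strichartz norms. Since $1\le\sigma<2$, the nonlinearity $|\psi|^{2\sigma}\psi$ is $H^1(\RR^3)$-subcritical, so the estimates close for $T$ small depending only on $\|\psi_0\|_{\Sigma^1}$ (and on $\eps$). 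The weighted and derivative norms entering $\Sigma^1$ are propagated because $[\bx,H^\eps]$ and $[\na,H^\eps]$ are first-order operators with coefficients that are polynomials of degree at most one (differentiating the quadratic potential produces a linear one; differentiating the first-order coefficients produces constants), so that $\psi^\eps$, $\bx\psi^\eps$ and $\na\psi^\eps$ solve a closed coupled system to which Lemma \ref{lemstri} and H\"older's inequality apply term by term; uniqueness is obtained exactly as in Step 1 of Proposition \ref{prop1} (Lemma 4.2.2 of \cite{cazenave2003semilinear}).

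Gluing these local solutions gives a maximal solution $\psi^\eps\in C([0,T_1^\eps),\Sigma^1)$ and, since the existence time depends only on the $\Sigma^1$-norm, a first blow-up alternative in terms of $\|\psi^\eps(t)\|_{\Sigma^1}$. To upgrade it to the statement of the Proposition I would argue as in Step 3 of the proof of Proposition \ref{prop1}: $\|\psi^\eps(t)\|_{L^2}$ is conserved because $H^\eps$ is self-adjoint and $\IM\langle|\psi^\eps|^{2\sigma}\psi^\eps,\psi^\eps\rangle_{L^2}=0$, while differentiating $\|\bx\psi^\eps(t)\|_{L^2}^2$ along \eqref{eq:psi} and integrating by parts leaves only terms bounded by $\|\bx\psi^\eps\|_{L^2}^2+\|\na\psi^\eps\|_{L^2}^2$ (the quadratic potential contributes nothing, and the three first-order terms are handled by integration by parts, at the cost of $\eps$-dependent constants). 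Gronwall's lemma then turns any bound on $\|\na\psi^\eps\|_{L^2}$ into a bound on $\|\bx\psi^\eps\|_{L^2}$, hence on $\|\psi^\eps\|_{\Sigma^1}$, which is the desired alternative.

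For the persistence of $\Sigma^s$ regularity, $s>1$, I would run the same fixed-point argument in the $\Sigma^s$-analogue of $X_{T,M}$: when $s>3/2$ the space $\Sigma^s$ is an algebra (and the embeddings of \cite{ben2005nonlinear} together with Moser-type product estimates cover $1<s\le 3/2$), so the $\Sigma^s$-norm of $|\psi|^{2\sigma}\psi$ is controlled by $\|\psi\|_{\Sigma^1}^{2\sigma}$ times the $\Sigma^s$-norm of $\psi$, and the commutators of $H^\eps$ with the operators underlying the equivalent norm \eqref{eq:normeq} — namely $\calH_z^{s/2}$, $\calH_x^{s/2}$, $|\bx|^{s}$ and $(-\Delta)^{s/2}$ — are of order strictly lower than $s$, so the auxiliary quantities again form a closed system modulo controllable errors. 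A contradiction argument as in Step 5 of Proposition \ref{prop1} shows that the $\Sigma^s$ solution survives on all of $[0,T_1^\eps)$, and $\psi^\eps\in C^1([0,T_1^\eps),L^2)$ follows by reading $\pa_t\psi^\eps$ off the equation. The only real obstacle is bookkeeping: one has to check that the three auxiliary first-order terms — in particular the singular $O(\eps^{-1})$ one — neither spoil the self-adjointness and Strichartz input from \cite{Ki} nor produce commutators that are too singular; since $\eps$ is fixed here no $\eps$-uniformity is needed, and everything else parallels the proof of Proposition \ref{prop1}.
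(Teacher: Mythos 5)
Your proposal is correct and follows essentially the same route as the paper, which treats \eqref{eq:psi} as a standard three-dimensional NLS with quadratic potential and rotation terms and invokes the fixed-point/Strichartz machinery of \cite{ams} and \cite{carles} (the full linear part being the Weyl quantization of a real-valued second-order polynomial, so that \cite{Ki} applies, exactly as for the operator $H$ in \eqref{H}). The paper's own proof is only a brief sketch deferring to Proposition \ref{prop1} and to \cite{ams}; your write-up fills in the details of that same scheme.
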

\begin{proof}
The proof follows along the same lines as the one for Proposition \ref{prop1} above, i.e., through a fixed point argument for Duhamel's formula in a suitable metric space. 
Indeed, it is even easier, since \eqref{eq:psi} is a standard 
three-dimensional NLS equation with quadratic potential and rotation term. For such equations, the local and global existence theory in $\Sigma^1$, based on Strichartz estimates and 
the use of energy-methods, has been studied in detail in \cite{ams} (see also \cite{carles}). Additional smoothness for $\psi_0\in \Sigma^s$ with $s>1$, then follows by the same arguments as given in Step 5 
in the proof of Proposition \ref{prop1}. 
\end{proof}

One might be concerned that, as $\eps \to 0$, the existence time $T_1^\eps \to 0$, but our convergence proof below will show that this is indeed not the case.
We finally, note that \eqref{eq:psi} admits the usual conservation laws for the mass and the total energy. The latter, however, is in general indefinite, due to the appearance of 
the angular momentum operator. Since, we shall not use any of these conservation laws in the following, we omit a more detailed discussion of these issues and refer 
the reader to \cite{ams}.


\section{Convergence proof and error estimates}\label{sec:error}

This section is devoted to the proof of our main Theorem \ref{mainthm}. Item (i) of this theorem is a consequence of Proposition \ref{prop1}.We prove items (ii) in Subsection \ref{sub1} after we have obtained 
some uniform estimates.

\bs
\subsection{Uniform estimates}
\label{sub0}
Let $0<T<T_{\rm max}$. By Proposition \ref{prop1}, we know that the solution $\phi$ of \eqref{eq:phi} belongs to $C([0,T],\Sigma^2)\cap C^1([0,T],L^2)$. Using the Sobolev embedding $H^2(\RR^3)\hookrightarrow L^\infty(\RR^3)$ and the unitarity of $e^{-it\calH_z/\eps^2}$ in $\Sigma^2$, we have
$$\|e^{-it\calH_z/\eps^2}\phi\|_{L^\infty((0,T)\times \RR^3)}\leq C\|e^{-it\calH_z/\eps^2}\phi\|_{L^\infty((0,T),\Sigma^2)}=C\|\phi\|_{L^\infty((0,T),\Sigma^2)}<+\infty,$$
so the following quantity is finite;
\begin{equation}
\label{defM}
M:=\sup_{\eps>0}\|e^{-it\calH_z/\eps^2}\phi\|_{L^\infty((0,T)\times \RR^3)}.
\end{equation}
Notice that, in particular, we have $\|\psi_0\|_{L^\infty}=\|\phi(t=0)\|_{L^\infty}\leq M.$

By Proposition \ref{prop2}, \eqref{eq:psi} admits a unique maximal solution $\psi^\eps\in C([0,T_1^\eps),\Sigma^2)\cap C^1([0,T_1^\eps),L^2)$. We recall that the function $u^\eps$ defined by
$$u^\eps=e^{-i\eps z(\Omega_1x_2-\Omega_2x_1)}\psi^\eps$$
satisfies \eqref{eq:u}. By \eqref{equivnorm}, it is clear that, for all $t\in [0,T_1^\eps)$,
$$(1-C_1\eps)\|\psi^\eps(t)\|_{\Sigma^2}\leq \|u^\eps(t)\|_{\Sigma^2}\leq (1+C_2\eps)\|\psi^\eps(t)\|_{\Sigma^2}.$$
Moreover, since
$$e^{-i\eps z(\Omega_1x_2-\Omega_2x_1)}-1=-i\eps z(\Omega_1x_2-\Omega_2x_1) \int_0^1e^{-i\eps z(\Omega_1x_2-\Omega_2x_1) s}\,ds,$$
we have also that
\begin{equation}
\label{diff_u_psi}\|u^\eps(t)-\psi^\eps(t)\|_{L^2}\leq C\eps\|u^\eps(t)\|_{\Sigma^2}.
\end{equation}
This will allow us to infer the desired approximation result for $\psi^\eps$, once we have a sufficiently good estimate on the 
difference between $e^{it \calH_z/\eps^2}u^\eps$ and the limit $ \phi$.

From a Gagliardo-Nirenberg inequality, we get
$$
\|u^\eps_0-\psi_0\|_{L^\infty}\leq C\|u^\eps_0-\psi_0\|_{L^2}^{1/4}\|u^\eps_0-\psi_0\|_{H^2}^{3/4}\leq C_1\eps^{1/4},
$$
Hence, for $\eps<\eps_1:=(M/2C_1)^4$, we have 
$$\|u^\eps(0)\|_{L^\infty}\leq \|u^\eps_0-\psi_0\|_{L^\infty}+\|\psi_0\|_{L^\infty}<3M/2$$ and we can define
\begin{equation}
\label{defTeps}
T^\eps=\sup\left\{t\in [0,T^\eps_1)\,:\,\mbox{for all }s\in [0,t],\,\|u^\eps(s)\|_{L^\infty}\leq 2M\right\}.
\end{equation}
\begin{lemma}
\label{lemunif}
There exists a constant $C_M$ such that, for $0<\eps<\eps_1$ and for all $t\in [0,T^\eps]$, we have
$$\|u^\eps(t)\|_{\Sigma^2}\leq C_M.$$
\end{lemma}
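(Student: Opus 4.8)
The plan is to derive a closed Gronwall-type inequality for $\|u^\eps(t)\|_{\Sigma^2}$ on the interval $[0,T^\eps]$, where by definition of $T^\eps$ we have the crucial a priori control $\|u^\eps(s)\|_{L^\infty}\leq 2M$ for all $s\in[0,T^\eps]$. I would work directly from the evolution equation \eqref{eq:u}, treating it in Duhamel form with respect to the group generated by the $z$-independent part of the Hamiltonian. First I would establish the bound for the $L^2$ and $\nabla_x$ components, using that the singular term $\eps^{-2}\calH_z$ commutes with $\calH_z^{1/2}$ (so it does not spoil $\Sigma^2$ estimates) and that the extra terms introduced by the change of variables \eqref{eq:trafo}, namely $\tfrac{3\eps^2}{2}(\Omega_1^2+\Omega_2^2)z^2 u^\eps$, $-\eps\Omega_z(\Omega_1x_1+\Omega_2x_2)z u^\eps$ and $2i\eps z(\Omega_2\partial_{x_1}-\Omega_1\partial_{x_2})u^\eps$, all carry a positive power of $\eps$ and are therefore harmless (they are bounded by $C\eps\|u^\eps\|_{\Sigma^2}$). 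The nonlinear term $\lambda|u^\eps|^{2\sigma}u^\eps$ is estimated using the $L^\infty$ bound: $\||u^\eps|^{2\sigma}u^\eps\|_{\Sigma^2}\lesssim \|u^\eps\|_{L^\infty}^{2\sigma}\|u^\eps\|_{\Sigma^2}\leq (2M)^{2\sigma}\|u^\eps\|_{\Sigma^2}$, which is the key point where linearity in the top-order norm is recovered.

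Concretely, I would apply the operators $\bx$, $\nabla_x$ and $\calH_z^{1/2}$ (or $\calH_z$) to \eqref{eq:u}, commute them through, and use the equivalence of norms \eqref{eq:normeq} together with the commutator identities already computed in the proof of Proposition \ref{prop1} (namely $[\partial_z,H]=[z,H]=0$, $[x,H]$ linear in $x$ and $\nabla$, $[\nabla_x,H]$ linear in $x$). For the $\calH_z$-weighted estimate one uses, exactly as in Step 5 of Proposition \ref{prop1}, that $\calH_z^{1/2}(|u|^{2\sigma}u)$ is controlled by $\|u\|_{L^\infty}^{2\sigma}\|\calH_z^{1/2}u\|$ and that $\calH_z(|u|^{2\sigma}u)$ is controlled by $\|u\|_{L^\infty}^{2\sigma}\|\calH_z u\|$ via the one-dimensional Gagliardo–Nirenberg inequality $\|\partial_z u\|_{L^4_z}\leq C\|\calH_z u\|_{L^2_z}^{1/2}\|u\|_{L^\infty_z}^{1/2}$. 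Collecting all pieces yields, for $\eps<\eps_1$ and $t\in[0,T^\eps]$,
\[
\|u^\eps(t)\|_{\Sigma^2}\leq \|u^\eps_0\|_{\Sigma^2}+C\int_0^t\bigl(1+(2M)^{2\sigma}\bigr)\|u^\eps(s)\|_{\Sigma^2}\,ds+C\eps\int_0^t\|u^\eps(s)\|_{\Sigma^2}\,ds,
\]
and since $\|u^\eps_0\|_{\Sigma^2}\leq (1+C_2\eps)\|\psi_0\|_{\Sigma^2}$ is bounded uniformly in $\eps$, Gronwall's lemma gives $\|u^\eps(t)\|_{\Sigma^2}\leq \|u^\eps_0\|_{\Sigma^2}\,e^{C_M t}$. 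The only subtlety is that this bound a priori depends on $t$ through $T^\eps$, which is itself not yet known to be bounded below; but since $T^\eps\leq T_1^\eps$ and $T^\eps$ is defined as a supremum over $[0,T^\eps_1)$, and the constant $C(1+(2M)^{2\sigma})$ does not depend on $\eps$, one obtains $\|u^\eps(t)\|_{\Sigma^2}\leq C_M:=\|\psi_0\|_{\Sigma^2}(1+C_2)e^{C(1+(2M)^{2\sigma})T}$ — here one should be slightly careful to phrase the Gronwall argument so that $T^\eps$ can be replaced by the fixed horizon $T$ once one knows (as the subsequent argument in the paper will show) that $T^\eps\geq T$; alternatively one simply states the bound with $e^{C_M T^\eps}$ and notes $T^\eps\leq T$ will be proved next.

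The main obstacle I anticipate is bookkeeping rather than conceptual: one must verify that every term arising from applying $\bx$, $\nabla_x$ and $\calH_z$ to the full equation \eqref{eq:u} either closes into the $\Sigma^2$ norm linearly or carries an explicit factor of $\eps$, and in particular that the cross term $2i\eps z(\Omega_2\partial_{x_1}-\Omega_1\partial_{x_2})u^\eps$, when hit with $\calH_z$ or with $\bx$, still only produces $\eps$ times $\Sigma^2$-controlled quantities (it does, since $z\partial_{x_j}$ maps $\Sigma^2\to\Sigma^1$ boundedly and $\calH_z$ applied to it gives terms like $z^3\partial_{x_j}u^\eps$ and $\partial_z^2(z\,\partial_{x_j}u^\eps)$, all bounded by $C\|u^\eps\|_{\Sigma^2}$ — here the implicit constant may grow but crucially stays $\eps$-independent after factoring $\eps$). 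No genuinely new estimate beyond those already deployed in Proposition \ref{prop1} is needed; the novelty is only that the $L^\infty$ bound from the definition of $T^\eps$ replaces the local-in-time smallness used there, allowing a global (on $[0,T^\eps]$) linear Gronwall estimate.
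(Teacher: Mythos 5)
Your proposal is correct and follows essentially the same strategy as the paper: exploit the a priori bound $\|u^\eps\|_{L^\infty}\leq 2M$ from the definition of $T^\eps$ to estimate $\||u^\eps|^{2\sigma}u^\eps\|_{\Sigma^2}\lesssim M^{2\sigma}\|u^\eps\|_{\Sigma^2}$, observe that the $\eps$-dependent terms from the change of variables are controlled by $\|u^\eps\|_{\Sigma^2}$ with $\eps$-independent constants, and close with Gronwall. The only (immaterial) difference is that the paper works with direct energy identities for $\frac{d}{dt}\|\calH_z u^\eps\|_{L^2}^2$ and $\frac{d}{dt}\|\calH_x u^\eps\|_{L^2}^2$ via the commutation relations rather than a Duhamel representation, and your remark about the time horizon in the Gronwall constant is a fair point that the paper itself glosses over.
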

\begin{proof} We first recall from \eqref{eq:normeq}, that
$$
\|u\|_{\Sigma^2}^2\simeq \|\calH_z u\|_{L^2}^2+\|\calH_x u\|_{L^2}^2 + \| u \|_{L^2}^2.
$$
We will now derive suitable a priori estimates for these three parts of the $\Sigma^2$-norm. To this end, we first multiply 
\eqref{eq:u} by $\overline {u^\eps}$, integrate over $\RR^3$, and take the real part of the resulting expression. This yields
\[
\frac{d}{dt} \| u^\eps \|^2_{L^2} = 0,
\]
i.e., the conservation of mass. For the other two parts of the $\Sigma^2$-norm, we first compute the commutation relations
\[
 [\calH_z, \calH_x] = [\calH_z, L_z] = [\calH_x, L_z] = [\calH_z, \Omega_1 x_1 \pm \Omega_2 x_2]= 0,
\] 
(where in the third expression we have used that $\calH_x$ is rotationally symmetric), as well as
\[
[ \calH_z, z] = - \partial_z, \ \text{and} \  [ \calH_z, z^2] = -(1+2z \partial_z).
\]
Keeping these relations in mind, we can thus apply $\calH_z$ to \eqref{eq:u}, commute, and, after multiplying by $ \calH_z \overline u$, integrate over $\RR^3$. Taking the real part of the resulting expression, we obtain
\begin{align*}
 \frac{d}{dt}  & \|\calH_z u^\eps \|_{L^2}^2 = -3   \eps^2  (\Omega^2_1  + \Omega^2_2 ) \IM  \langle \calH_z u^\eps, u^\eps + 2 z\partial_z u^\eps\rangle_{L^2}   \\
 &+2  \eps \IM  \langle \calH_z u^\eps,  \Omega_z  (\Omega_1 x_1+\Omega_2 x_2)  \partial_z u^\eps  -2(\Omega_2 \partial^2_{x_1, z}u^\eps -\Omega_1 \partial^2_{x_2,z}u^\eps) \rangle_{L^2}\\
 & + 2 \lambda  \IM \langle \calH_z u^\eps , \calH_z |u^\eps|^{2\sigma} u^\eps \rangle_{L^2}.
\end{align*}
After several integrations by parts and using Cauchy-Schwarz, this yields the following estimate
\begin{align*}
 \frac{d}{dt} \,  \|\calH_z u^\eps\|_{L^2}^2 \le  & \  \eps^2 C_1 \left(   \|\calH_z u^\eps\|_{L^2}^2  + \| u^\eps \|_{L^2}^2 \right) + \eps  C_2  \left(  \|\calH_z u^\eps\|_{L^2}^2 + \|\calH_x u^\eps\|_{L^2}^2 \right) \\
 & \, + C_3 |\lambda| \left(  \|\calH_z u^\eps\|_{L^2}^2 +  \|\calH_z |u^\eps|^{2\sigma } u^\eps \|_{L^2}^2 \right),
\end{align*}
where $C_1, C_2,C_3$ are some constants depending only on $\Omega_1, \Omega_2$, and $\Omega_z$, but not on $\eps$.
Similarly, a computation shows
\begin{align*}
 \frac{d}{dt}  \|\calH_x u^\eps\|_{L^2}^2 =  &\, \Omega^2_ 2 \IM \langle \calH_xu^\eps, u^\eps + 2x_1 \partial_{x_1} u^\eps\rangle_{L^2}  
+ \Omega^2_ 1 \IM \langle \calH_xu^\eps, u^\eps + 2 x_2 \partial_{x_2} u^\eps\rangle_{L^2} \\
& \, - 2 {\Omega_1 \Omega_2 }  \IM \langle \calH_xu^\eps, x_2 \partial_{x_1} u^\eps + x_1 \partial_{x_2} u^\eps \rangle_{L^2}\\ 
& \, + 2 \eps \Omega_ z \IM \langle \calH_xu^\eps, z(\Omega_ 1 \partial_{x_1} u^\eps + \Omega_2 \partial_{x_2}u^\eps-(\Omega_1 x_2-\Omega_2x_1)u^\eps)\rangle_{L^2}\\
& \, + 2 \lambda  \IM \langle \calH_x u^\eps , \calH_x |u^\eps|^{2\sigma} u^\eps \rangle_{L^2},
\end{align*}
and using again Cauchy-Schwarz yields the analogous estimate for $\| \calH_x u^\eps \|_{L^2}$. Combining the three estimates obtained above, allows us to write
\begin{align}\label{eq:dt}
\frac{d}{dt} \| u^\eps \|_{\Sigma^2}^2 \le K_1\| u^\eps \|_{\Sigma^2}^2 + |\lambda| K_2 \| |u^\eps|^{2\sigma} u^\eps\|^2_{\Sigma^2} 
\end{align}
where $K_{1,2}=K_{1,2}(\eps, \Omega_1, \Omega_2, \Omega_z)> 0$ are both bounded as $\eps \to 0$. Now, we use the fact that, by Sobolev's imbedding, 
\[\||u^\eps|^{2\sigma} u^\eps\|_{\Sigma^2}\leq C\|u^\eps\|_{L^\infty}^{2\sigma}\|u^\eps\|_{\Sigma^2}\leq CM^{2\sigma}\|u^\eps\|_{\Sigma^2}\]
equation \eqref{eq:dt} implies
\begin{align*}
\frac{d}{dt} \| u^\eps \|_{\Sigma^2}^2 \le K_1\| u^\eps \|_{\Sigma^2}^2 + |\lambda| K_3 M^{4\sigma}\|u^\eps\|^2_{\Sigma^2}.
\end{align*}
Gronwall's lemma consequently implies that $\| u^\eps \|_{\Sigma^2}$ stays bounded for all $t\in [0, T_\eps]$.
\end{proof}

We note that an important consequence of this lemma is that
\begin{equation}
\label{alterTeps}
\mbox{if}\quad T^\eps<+\infty\quad \mbox{then }T^\eps<T^\eps_1\quad \mbox{and}\quad \|u^\eps(T^\eps)\|_{L^\infty}=2M.
\end{equation}

\bs
\subsection{Proof of the error estimate}
\label{sub1}
In this section, we prove Item {(ii)} of Theorem \ref{mainthm}. Consider the function $v^\eps=e^{it\calH_z/\eps^2}u^\eps$. This function satisfies the equation
\begin{equation*}
i\pa_t v^\eps=-\frac{1}{2}\Delta_x v^\eps+\frac{1}{2}\left(|x|^2-\left(\Omega_2x_1-\Omega_1x_2\right)^2\right)v^\eps-\Omega_zL_z v^\eps+\lambda F\left(\frac{t}{\eps^2},v^\eps\right)+\eps r_1^\eps+\eps^2 r_2^\eps
\label{eq:v}
\end{equation*}
with 
$$v^\eps(t=0)=u^\eps_0=e^{-i\eps z(\Omega_2x_1-\Omega_1x_2)}\psi_0,$$
and where we have denoted
\begin{align*}
&r_1^\eps=J e^{it\calH_z/\eps^2}ze^{-it\calH_z/\eps^2}v^\eps, \quad J:=\left( 2 (\Omega_2 \partial_{x_1}-\Omega_1 \partial_{x_2})-\Omega_z\left(\Omega_1x_1+\Omega_2x_2\right)\right),\\
&r_2^\eps=\frac{3}{2}\left(\Omega_1^2+\Omega_2^2\right)e^{it\calH_z/\eps^2}z^2e^{-it\calH_z/\eps^2}v^\eps.
\end{align*}
We have $v^\eps\in C([0,T_1^\eps],\Sigma^2)\cap C^1([0,T_1^\eps],L^2)$ and, by Lemma \ref{lemunif},
$$\max_{t\in [0,T^\eps]}\|v^\eps(t)\|_{\Sigma^2}=\max_{t\in [0,T^\eps]}\|u^\eps(t)\|_{\Sigma^2}\leq C_M.$$
Hence, by \eqref{equivnorm}, we get for $t\in [0,T^\eps]$
\begin{align}
\|r_1^\eps\|_{L^2}&\leq C\|\calH_x^{1/2} e^{it\calH_z/\eps^2}ze^{-it\calH_z/\eps^2}v^\eps\|_{L^2}\nonumber\\
&\leq C\|e^{it\calH_z/\eps^2}ze^{-it\calH_z/\eps^2}v^\eps\|_{\Sigma^1}=C\|ze^{-it\calH_z/\eps^2}v^\eps\|_{\Sigma^1}\nonumber\\
&\leq C\|e^{-it\calH_z/\eps^2}v^\eps\|_{\Sigma^2}=C\|v^\eps\|_{\Sigma^2}\leq C_M,\label{estr1}
\end{align}
\begin{align}
\|r_2^\eps\|_{L^2}&=C\|e^{it\calH_z/\eps^2}z^2e^{-it\calH_z/\eps^2}v^\eps\|_{L^2}=C\|z^2e^{-it\calH_z/\eps^2}v^\eps\|_{L^2}\nonumber\\
&\leq C\|e^{-it\calH_z/\eps^2}v^\eps\|_{\Sigma^2}=C\|v^\eps\|_{\Sigma^2}\leq C_M.\label{estr2}
\end{align}

We are now ready to estimate the difference $w^\eps(t)=v^\eps(t)-\phi(t)$ for $0\leq t\leq \min(T,T^\eps)$. This function satisfies
\begin{align*}
w^\eps(t)&=U(t)(u_0^\eps-\psi_0)+\lambda \int_0^t U(t-s)\left(F\left(\frac{s}{\eps^2},v^\eps(s)\right)-F\left(\frac{s}{\eps^2},\phi(s)\right)\right)ds\\
&\quad +\lambda \int_0^t U(t-s)\left(F\left(\frac{s}{\eps^2},\phi(s)\right)-F_{\rm av}(\phi(s))\right)ds\\
&\quad+\eps\int_0^tU(t-s)(r_1^\eps(s)+\eps r_2^\eps(s))ds\\
&=A_1+A_2+A_3+A_4.
\end{align*}
Since $U(t)$ is unitary on $L^2$,  \eqref{diff_u_psi} yields
$$\|A_1\|_{L^2}=\|u_0^\eps-\psi_0\|_{L^2}\leq C\eps.$$
Moreover, for $0\leq t\leq \min(T,T^\eps)$, \eqref{defM} and \eqref{defTeps} imply that
\begin{align*}
\|A_2\|_{L^2}&\leq C\int_0^t\left\||u^\eps(s)|^{2\sigma}u^\eps(s)-|e^{-is\calH_z/\eps^2}\phi(s)|^{2\sigma}e^{-is\calH_z/\eps^2}\phi(s)\right\|_{L^2}ds\\
&\leq C\int_0^t\left(\|u^\eps(s)\|_{L^\infty}^{2\sigma}+\left\||e^{-is\calH_z/\eps^2}\phi(s)\right\|_{L^\infty}^{2\sigma}\right)\|u^\eps(s)-e^{-is\calH_z/\eps^2}\phi(s)\|_{L^2}ds\\
&\leq C M^{2\sigma}\int_0^t\|w(s)\|_{L^2}ds
\end{align*}
and \eqref{estr1} and \eqref{estr2} give
$$\|A_4\|_{L^2}\leq C\eps.$$
Let us estimate $A_3$. To this aim, we introduce the following function, defined on $\RR\times \Sigma^2$,
$$\mathcal F(\theta,u)=\int_0^\theta (F(s,u)-F_{\rm av}(u))ds.$$
Since $F(\cdot,u)$ is $2\pi$-periodic and $F_{\rm av}$ is its average, $\mathcal F(\theta,u)$ is periodic with respect to $\theta$. Hence, it is readily seen that this function satisfies the following properties:
\begin{align*}\mbox{if}\quad \|u\|_{\Sigma^2}\leq R\quad &\mbox{then}\quad \sup_{\theta\in \RR}\|\mathcal F(\theta,u)\|_{\Sigma^2}\leq CR^{2\sigma+1},\\
\mbox{if}\quad \|u\|_{\Sigma^2}+\|v\|_{L^2}\leq R\quad &\mbox{then}\quad \sup_{\theta\in \RR}\|D_u\mathcal F(\theta,u)\cdot v\|_{L^2}\leq CR^{2\sigma+1}.
\end{align*}
Recall that $U(t)=e^{itH}$, where $H$ is the Hamiltonian defined by \eqref{H}. Hence
\begin{align*}
& U(t-s)\left(F\left(\frac{s}{\eps^2},\phi(s)\right)-F_{\rm av}(\phi(s))\right)\\
&=\eps^2\frac{d}{ds}\left(U(t-s)\mathcal F\left(\frac{s}{\eps^2},\phi(s)\right)\right)
 +i\eps^2 U(t-s)H\mathcal F\left(\frac{s}{\eps^2},\phi(s)\right) \\
& \quad -\eps^2 U(t-s)D_u\mathcal F\left(\frac{s}{\eps^2},\phi(s)\right)\cdot\pa_t\phi(s),
\end{align*}
and then
\begin{align*}
\|A_3\|_{L^2}&\leq \eps^2|\lambda|\left\|\mathcal F\left(\frac{t}{\eps^2},\phi(t)\right)\right\|_{L^2}+\eps^2|\lambda|\int_0^t\left\|H\mathcal F\left(\frac{s}{\eps^2},\phi(s)\right)\right\|_{L^2}ds\\
&\quad +\eps^2|\lambda|\int_0^t\left\|D_u\mathcal F\left(\frac{s}{\eps^2},\phi(s)\right)\cdot\pa_t\phi(s)\right\|_{L^2}ds\\
&\leq C\eps^2,
\end{align*}
where we used that $\phi\in L^\infty([0,T],\Sigma^2)$ and $\pa_t\phi\in L^\infty([0,T],L^2)$.
In summary, we have proved that, for all $t\in[0,\min(T,T_\eps)]$,
$$\|w^\eps(t)\|_{L^2}\leq C\eps+C\int_0^t\|w^\eps(s)\|_{L^2}ds.$$
Thus, Gronwall's lemma yields
\begin{equation}
\label{estieps}\|u^\eps(t)-e^{-it\calH_z/\eps^2}\phi(t)\|_{L^2}=\|v^\eps(t)-\phi(t)\|_{L^2}=\|w^\eps(t)\|_{L^2}\leq C\eps.
\end{equation}
In particular, we deduce from \eqref{defM}, from a Gagliardo-Nirenberg inequality, from \eqref{estieps} and from Lemma \ref{lemunif} that
\begin{align*}
\|u^\eps(t)\|_{L^\infty}&\leq M+\|u^\eps(t)-e^{-it\calH_z/\eps^2}\phi(t)\|_{L^\infty}\\
&\leq M+\|u^\eps(t)-e^{-it\calH_z/\eps^2}\phi(t)\|_{L^2}^{1/4}\|u^\eps(t)-e^{-it\calH_z/\eps^2}\phi(t)\|_{H^2}^{3/4}\\
&\leq M+C\eps^{1/4}\left(\|u^\eps(t)\|_{\Sigma^2}+\|\phi(t)\|_{\Sigma^2}\right)^{3/4}\\
&\leq M+C\eps^{1/4}.
\end{align*}
Hence, for $\eps<\eps_T:=(M/2C)^4$, we have 
\begin{equation}
\label{bornLinf}
\forall t\leq \min(T,T^\eps),\quad \|u^\eps(t)\|_{L^\infty}<3M/2.
\end{equation} It is clear then that $T_\eps\geq T$. Indeed, otherwise this would imply that $T^\eps<+\infty$ thus, by \eqref{alterTeps}, that $\|u^\eps(T_\eps)\|=2M$, which contradicts \eqref{bornLinf}. Consequently, \eqref{estieps} is valid on $[0,T]$ and, together with \eqref{diff_u_psi} and Lemma \ref{lemunif}, this yields 
$$\|\psi^\eps(t)-e^{-it\calH_z/\eps^2}\phi(t)\|_{L^2}\leq \|\psi^\eps(t)-u^\eps(t)\|_{L^2}+\|u^\eps(t)-e^{-it\calH_z/\eps^2}\phi(t)\|_{L^2}\leq C\eps$$
for all $t\in[0,T]$. We have proved Item (ii) of Theorem \ref{mainthm}. \qed

\bs

We note that under under sufficiently high regularity assumptions on $\psi^\eps(t)$, a slightly stronger approximation result can be proved. In this case, one can show that, indeed, $\| A_4 \|_{L^2} \le C \eps^2$, and not 
only $\mathcal O(\eps)$ as shown above. 
To see this, we expand
\[
R_1^\eps:=\eps\int_0^t U(t-s) r_1^\eps(s) \, ds ,
\]
using the eigenfunctions of $\calH_z$. Writing $v^\eps (t,x,z)= \sum_{m\in \NN} v^\eps_m(t,x) \chi_m(z)$ we obtain
\[
R_1^\eps = J \int_0^t U(t-s) \sum_{m\not = n\in \NN} \langle z \chi_m, \chi_n\rangle_{L^2} \, e^{i s (\lambda_m - \lambda_n)/\eps^2} v_m^\eps(s) \chi_m \, ds,
\]
where $J$ is as above. Here, we have also used that $ z |\chi_m|^2$ is odd and thus only indices $m\not = n$ appear in the double sum above and $R_1^\eps$ is consequently seen to be highly oscillatory. 
After an integration by parts in time (which requires the improved regularity of $v^\eps$), one obtains that $R_1^\eps = \mathcal O(\eps^2)$. 
Using this one can show that, for $0<T<T_{\rm max}$ and $0<\eps\leq\eps_T$, the following {\it improved} error estimate holds
$$\max_{t\in [0,T]}\left\|\psi^\eps(t)-e^{i\eps z(\Omega_1x_2-\Omega_2x_1)}e^{-it \calH_z/\eps^2}\phi^\eps(t)\right\|_{L^2}\leq C_T\,\eps^{2},$$
where $\phi^\eps$ is the solution of \eqref{eq:phi} with initial data
\begin{equation}
\phi^\eps(t=0)=e^{-i\eps z(\Omega_1x_2-\Omega_2x_1)}\psi_0.
\end{equation}
Note, however, that if we apply this $\eps$-correction to the Cauchy data, the solution $\phi^\eps$ does not remain polarized any more, in which case the reduced model is still posed in three spatial dimensions.


\section{The case of strong two-dimensional confinement}\label{sec:1d}

In this section, we briefly discuss how to obtain the limiting model in the case of strong {\it two-dimensional} confinement within the original (three-dimensional) Gross-Pitaevskii equation. 
To this end, we we start with the analog of \eqref{GPEinit}, given by
\begin{equation}
\label{GPEinit1d}
i\pa_t \psi=-\frac{1}{2}\Delta\psi+\left(\frac{|x|^2}{2\eps^4}+\frac{|z|^2}{2}\right)\psi+i\Omega\cdot\left(\bx\wedge\na\right)\psi+\beta^\eps |\psi|^{2\sigma}\psi,
\end{equation}
subject to $\psi(t=0, \bx) = \eps^{-1} \psi_0(x/\eps,z)$, where as before $\bx = (x, z) \in \RR^3$ with $x=(x_1, x_2)\in \RR^2$ and $z \in \RR$. 
Note that in comparison to \eqref{GPEinit} the roles of $x$ and $z$ have been {\it reversed}. Thus, 
in \eqref{GPEinit1d}, the $x$ variables are now the ones which represent the strongly confined directions, and we consequently aim to derive an effective model depending on the $z$-variable only. 
To this end, we rescale $$x'=\frac{x}{\eps}, \quad z'=z, \quad \psi^\eps(t,x',z')=\eps 
\psi \left(t,\eps x', z' \right)$$
and assume that $\beta^\eps=\lambda \eps^{2\sigma}$, i.e., an even weaker interaction regime as before.
The rescaled NLS equation then becomes
\begin{align}\label{eq:psi1d}
i\pa_t \psi^\eps=&\, \frac{1}{\eps^2}\calH_x\psi^\eps+ \calH_z\psi^\eps- \frac{i}{\eps} z \left(\Omega_{1}  \partial_{x_2} \psi^\eps - \Omega_2  \partial_{x_1} \psi^\eps   \right)-\Omega_zL_z\psi^\eps\nonumber\\
&\, -i\eps \left(\Omega_2  x_1-\Omega_1 x_2 \right)\partial_z \psi^\eps+\lambda|\psi^\eps|^{2\sigma}\psi^\eps
\end{align}
with $\psi^\eps(t=0,x,z)=\psi_0(x,z)$. 
In order to get rid of the singular perturbation we invoke the {\it same} change of variables (up to a sign)
as in the case of a strong one-directional confinement, i.e.,
\[
\psi^\eps(t,x,z)=e^{i\eps z(\Omega_2x_1-\Omega_1x_2)}u^\eps(t,x,z).
\]
After a somewhat lengthy computation, this yields the following analog of \eqref{eq:u}:
\begin{equation}
\begin{split}
& i\pa_t u^\eps=\, \frac{1}{\eps^2}\calH_xu^\eps+\calH_z u^\eps-\frac{1}{2} (\Omega_1^2 + \Omega_2^2) z^2 u^\eps -\Omega_zL_z u^\eps+\lambda|u^\eps|^{2\sigma}u^\eps\\
&\, +\eps\Omega_z\left(\Omega_1x_1+\Omega_2x_2\right)zu^\eps+ 2i \eps (\Omega_1 x_2-\Omega_2 x_1) \partial_z u^\eps+ \frac{3}{2} \eps^2 (\Omega_2 x_1 - \Omega_1 x_2)^2 u^\eps.
\end{split}
\label{eq:u1d}
\end{equation}

In order to average out the fast oscillations stemming from $\calH_x$, we introduce 
\begin{align*}
G(\theta,u)&=e^{i\theta \calH_x}\left(\left|e^{-i\theta \calH_x}u\right|^{2\sigma}e^{-i\theta \calH_x}u\right),
\end{align*}
which satisfies $G\in C(\RR\times \Sigma^s; \Sigma^s)$. Moreover, $G$ is easily seen to be a $2\pi$-periodic function in $\theta$, 
since the spectrum of the two-dimensional harmonic oscillator $\calH_x$ is given by $\{ \lambda_n = n + 1\, , \, n \in \NN_0 \}$. Note however, that the  
eigenspace corresponding to $\lambda_n$ is $(n+1)$-fold degenerate. We consequently denote the 
associated averaged nonlinearity by
\begin{equation*}
\begin{split}
G_{\rm av}(u):=  \frac{1}{2\pi} \int_0^{2\pi}e^{i\theta \calH_x}\left(\left|e^{-i\theta \calH_x}u\right|^{2\sigma}e^{-i\theta \calH_x}u\right)d\theta,
\end{split}
\end{equation*}
and find, that, as $\eps\to 0$, the new {\it limiting model} becomes
\begin{equation}
i\pa_t \phi=-\frac{1}{2}\, \partial^2_{z} \phi+\frac{1}{2}\left(1-\left(\Omega^2_1 + \Omega^2_2\right)\right)z^2 \phi- \Omega_z L_z \phi+\lambda G_{\rm av}(\phi)
\label{eq:phi1d}
\end{equation}
Again, we note the appearance of an additional negative (repulsive) quadratic potential, provided $\Omega_1 , \Omega_2 \not =0$. 

\bs
The limiting model \eqref{eq:phi1d} has the drawback to still be an equation in three dimensions. But, having in mind that $[\calH_x, L_z]=0$, 
there exists a {\it joint} orthonormal basis of eigenfunctions $\{ \chi_\alpha \}_{\alpha \in \NN^2}$ where $\alpha = (\alpha_1, \alpha_2)$, such that 
\[
L_z \chi_\alpha =  \mu_\alpha \chi_\alpha, \quad \calH_x \chi_\alpha = \lambda_n \chi_\alpha, \quad  \mu_\alpha, \lambda_n \in \RR, \, n = \alpha_1 +\alpha_2.
\]
The simplest situation is then obtained for initial data $\phi_0$ concentrated in the eigenspace corresponding to the ground state energy $\lambda_0\equiv 1$. 
This eigenvalue is known to be non-degenerate, i.e., $\phi_0(x_1, x_2,z) = \varphi_0(z) \chi_0(x_1, x_2)$. In addition, $\chi_0\equiv \chi_{0,0}$ is known to be radially symmetric 
which implies $\mu_0 = 0$. 
By the same arguments as earlier (see the remarks below Corollary \ref{cor:polar}), we consequently obtain that \eqref{eq:phi1d} admits 
polarized solutions of the form
\[
\phi(t,x,z) =  e^{-i t /\eps^2} \varphi(t,z) \chi_{0}(x_1,x_2),
\]
where $\varphi$ solves the {\it one-dimensional} NLS equation
\begin{equation}\label{eq:1dNLS}
i\pa_t \varphi=-\frac{1}{2}\, \partial^2_z \varphi+\frac{1}{2}\left(1-\left(\Omega^2_1 + \Omega^2_2\right)\right)z^2\varphi+\kappa_0 |\varphi|^{2\sigma} \varphi,
\end{equation}
where $\kappa_0 = \lambda \| \chi_0 \|_{L^{2\sigma+2}}^{2\sigma+2}$.

\bs
Our main result in this section is then as follows:

\begin{theorem}
\label{mainthm2}
Let $1\leq \sigma<2$ and $\psi_0\in \Sigma^2$. Then the following holds: \\[1mm]
{\rm (i)} The limit model \eqref{eq:phi1d} admits a unique maximal solution $\phi \in C([0,T_{\rm max}),\Sigma^2)\cap C^1([0,T_{\rm max}),L^2)$, with $T_{\rm max}\in (0,+\infty]$, such that for all $t\in [0, T_{\rm max})$:
\[
\| \phi(t)\|_{L^2} = \| \psi_0 \|_{L^2}, \quad E(\phi(t)) = E(\psi_0), \quad \langle \calH_x \phi(t), \phi(t) \rangle_{L^2} = \langle \calH_x \psi_0, \psi_0 \rangle_{L^2}.
\]
{\rm (ii)} For all $T\in(0,T_{\rm max})$, there exists $\eps_T>0$, $C_T>0$ such that, for all $\eps\in(0,\eps_T]$, \eqref{eq:psi1d} admits a unique solution $\psi^\eps\in C([0,T],\Sigma^2)\cap C^1([0,T],L^2)$, which is uniformly bounded with respect to $\eps\in (0,\eps_T]$ in $L^\infty((0,T),\Sigma^2)$ and satisfies the error bound
$$\max_{t\in [0,T]}\left\|\psi^\eps(t)-e^{-it \calH_x/\eps^2}\phi(t)\right\|_{L^2}\leq C_T\,\eps.$$
{\rm (iii)} If moreover the initial data is such that $\psi_0 (x,z) =  \varphi_0(z) \chi_0 (x)$, then, for all $T\in(0,T_{\rm max})$, we have
$$
\max_{t\in [0,T]}\left\|\psi^\eps(t)-e^{-it /\eps^2}\varphi(t)\chi_\alpha \right\|_{L^2}\leq C_T\,\eps,$$
where $\varphi(t,x)$ solves \eqref{eq:1dNLS}.
\end{theorem}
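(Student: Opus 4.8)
The plan is to transpose, step by step, the arguments of Sections~\ref{sec:cauchy}--\ref{sec:error} to the present setting, interchanging throughout the roles of $x\in\RR^2$ and $z\in\RR$; the only structural novelty is that in the limiting model \eqref{eq:phi1d} dispersive effects now occur in the \emph{one}-dimensional variable $z$, while $x$ merely plays the role of a parameter. I write $U(t)=e^{-it\widetilde H}$ for the unitary group generated by
\[
\widetilde H=\widetilde H_z-\Omega_zL_z,\qquad \widetilde H_z:=-\frac12\pa_z^2+\frac12\bigl(1-(\Omega_1^2+\Omega_2^2)\bigr)z^2,
\]
which is essentially self-adjoint on $C_0^\infty(\RR^3)$ as a partial Weyl quantization of a quadratic symbol, cf.\ \cite{Ki}; the effective one-dimensional potential $\frac12(1-(\Omega_1^2+\Omega_2^2))z^2$ is confining if $\Omega_1^2+\Omega_2^2<1$ and repulsive if $\Omega_1^2+\Omega_2^2>1$, but in both cases local-in-time Strichartz estimates for $e^{-it\widetilde H_z}$ are available from \cite{Ki} (even global-in-time in the repulsive case, cf.\ \cite{carles2}).

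\emph{Item (i).} First I would establish a one-dimensional version of Lemma~\ref{lemstri}: choosing a Hilbert basis $(e_p)_p$ of $L^2_x(\RR^2)$ made of eigenfunctions of $L_z$ (which exists since $L_z$ is self-adjoint with pure point spectrum), one has, for $\phi=\sum_p\phi_p(z)e_p(x)$ with $L_ze_p=\mu_pe_p$, that $U(t)\phi=\sum_p e^{it\Omega_z\mu_p}\bigl(e^{-it\widetilde H_z}\phi_p\bigr)(z)\,e_p(x)$, whence $\|U(t)\phi\|_{L^2_x}^2=\sum_p|e^{-it\widetilde H_z}\phi_p|^2$; combining the one-dimensional Strichartz estimates for $e^{-it\widetilde H_z}$ with two applications of Minkowski's inequality, exactly as in Lemma~\ref{lemstri}, yields the vectorial estimates in $L^q_tL^r_zL^2_x$ for all one-dimensional admissible pairs $\frac2q=\frac12-\frac1r$. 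With these in hand, local well-posedness of \eqref{eq:phi1d} in $\Sigma^1$, the blow-up alternative in $\|\pa_z\phi\|_{L^2}$, the conservation of mass, of the energy $E(\phi)=\langle\widetilde H\phi,\phi\rangle_{L^2}+\frac{\lambda}{2\pi(\sigma+1)}\int_{\RR^3}\int_0^{2\pi}|e^{-i\theta\calH_x}\phi|^{2\sigma+2}\,d\theta\,dx\,dz$ and of $\|\calH_x^{1/2}\phi\|_{L^2}$ (using here $[\calH_x,\widetilde H]=0$ and $[\calH_x^{1/2},e^{i\theta\calH_x}]=0$, cf.\ \cite{bcm}), and the propagation of $\Sigma^2$-regularity with $\phi\in C^1([0,T_{\rm max}),L^2)$, then all follow by repeating Steps~1--5 of the proof of Proposition~\ref{prop1} verbatim, the anisotropic Sobolev embeddings of \cite{ben2005nonlinear} adapted to the new splitting and the fact that $\widetilde H$, $\bx$ and $\na$ form a closed commutator system being the only modifications needed. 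For fixed $\eps>0$, the analogue of Proposition~\ref{prop2} provides a unique maximal $\Sigma^2$-solution of \eqref{eq:psi1d}, since the latter is a three-dimensional NLS with a quadratic potential, a rotation term and bounded $\eps$-dependent lower-order perturbations, covered by \cite{ams,carles}.

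\emph{Item (ii).} Setting $u^\eps=e^{-i\eps z(\Omega_2x_1-\Omega_1x_2)}\psi^\eps$, which solves \eqref{eq:u1d}, and $v^\eps=e^{it\calH_x/\eps^2}u^\eps$, and using that $e^{it\calH_x/\eps^2}$ commutes with $\calH_z$, $z^2$, $L_z$ and $\pa_z$, but neither with multiplication by $x_1,x_2$ nor with the nonlinearity, conjugation of \eqref{eq:u1d} yields
\[
i\pa_tv^\eps=-\frac12\pa_z^2v^\eps+\frac12\bigl(1-(\Omega_1^2+\Omega_2^2)\bigr)z^2v^\eps-\Omega_zL_zv^\eps+\lambda G\Bigl(\frac{t}{\eps^2},v^\eps\Bigr)+\eps r_1^\eps+\eps^2r_2^\eps,
\]
where $r_1^\eps$ collects the terms $\Omega_z\,z\,e^{it\calH_x/\eps^2}\bigl((\Omega_1x_1+\Omega_2x_2)e^{-it\calH_x/\eps^2}v^\eps\bigr)$ and $2i\,\pa_z\,e^{it\calH_x/\eps^2}\bigl((\Omega_1x_2-\Omega_2x_1)e^{-it\calH_x/\eps^2}v^\eps\bigr)$, and $r_2^\eps=\frac32\,e^{it\calH_x/\eps^2}\bigl((\Omega_2x_1-\Omega_1x_2)^2e^{-it\calH_x/\eps^2}v^\eps\bigr)$. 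Since $e^{it\calH_x/\eps^2}$ is unitary on $\Sigma^s$, the factors $z$, $\pa_z$ and $x_j$ pass through it up to the obvious commutators, and $\||\bx|^2f\|_{L^2}+\||\bx|(-\Delta)^{1/2}f\|_{L^2}\lesssim\|f\|_{\Sigma^2}$ by \eqref{eq:normeq}--\eqref{equivnorm}, one gets $\|r_1^\eps\|_{L^2}+\|r_2^\eps\|_{L^2}\le C\|v^\eps\|_{\Sigma^2}$. The heart of the proof --- and the step I expect to be the main obstacle --- is the analogue of Lemma~\ref{lemunif}, namely the uniform-in-$\eps$ bound $\|u^\eps(t)\|_{\Sigma^2}\le C_M$ on the maximal interval where $\|u^\eps(\cdot)\|_{L^\infty}\le2M$, with $M$ as in \eqref{defM}. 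As in Subsection~\ref{sub0}, one differentiates the three constituents $\|u^\eps\|_{L^2}$, $\|\calH_xu^\eps\|_{L^2}$, $\|\calH_zu^\eps\|_{L^2}$ of the $\Sigma^2$-norm along \eqref{eq:u1d}, invoking $[\calH_x,\calH_z]=[\calH_x,L_z]=[\calH_z,L_z]=[\calH_x,z^2]=0$, $[\calH_x,x_j]=-\pa_{x_j}$, $[\calH_x,\pa_{x_j}]=x_j$, $[\calH_z,z]=-\pa_z$, $[\calH_z,z^2]=-(1+2z\pa_z)$, and Cauchy--Schwarz, reaching
\[
\frac{d}{dt}\|u^\eps\|_{\Sigma^2}^2\le K_1\|u^\eps\|_{\Sigma^2}^2+|\lambda|K_2\bigl\||u^\eps|^{2\sigma}u^\eps\bigr\|_{\Sigma^2}^2
\]
with $K_{1,2}$ bounded as $\eps\to0$; the algebra property of $\Sigma^2$ together with the Gagliardo--Nirenberg inequality $\|\na f\|_{L^4(\RR^3)}\lesssim\|f\|_{L^\infty}^{1/2}\|f\|_{H^2}^{1/2}$ give $\||u^\eps|^{2\sigma}u^\eps\|_{\Sigma^2}\le C\|u^\eps\|_{L^\infty}^{2\sigma}\|u^\eps\|_{\Sigma^2}\le CM^{2\sigma}\|u^\eps\|_{\Sigma^2}$, and Gronwall closes the estimate. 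The only new care required is to verify that the order-one term $-\frac12(\Omega_1^2+\Omega_2^2)z^2u^\eps$ generates only lower-order commutators, and that every contribution of the singular factor $\eps^{-2}\calH_x$ either cancels (it commutes with $\calH_x$) or carries a power of $\eps$. Granting this, one defines $T^\eps$ as in \eqref{defTeps}, writes $w^\eps=v^\eps-\phi$ via Duhamel as $A_1+A_2+A_3+A_4$ exactly as in Subsection~\ref{sub1}, bounds $\|A_1\|_{L^2}=\|u_0^\eps-\psi_0\|_{L^2}\le C\eps$ and $\|A_4\|_{L^2}\le C\eps$ from the remainder estimates, $\|A_2\|_{L^2}\le CM^{2\sigma}\int_0^t\|w^\eps\|_{L^2}\,ds$, and $\|A_3\|_{L^2}\le C\eps^2$ by introducing $\mathcal G(\theta,u)=\int_0^\theta\bigl(G(s,u)-G_{\rm av}(u)\bigr)\,ds$ (which is $2\pi$-periodic in $\theta$ and, together with its differential, bounded on $\Sigma^2$) and integrating by parts in $s$; Gronwall gives $\|w^\eps(t)\|_{L^2}\le C\eps$, the continuation argument based on the $L^\infty$-bound for $u^\eps$ shows $T^\eps\ge T$, and $\|\psi^\eps-u^\eps\|_{L^2}\le C\eps\|u^\eps\|_{\Sigma^2}$ transfers the estimate to $\psi^\eps$.

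\emph{Item (iii).} Finally, with $\psi_0=\varphi_0(z)\chi_0(x)$, one checks that \eqref{eq:phi1d} propagates this polarization: since $\calH_x$ acts only on $x$ and $\calH_x\chi_0=\chi_0$, one has $e^{-i\theta\calH_x}(\varphi\chi_0)=e^{-i\theta}\varphi\chi_0$, so that
\[
G_{\rm av}(\varphi\chi_0)=\frac{1}{2\pi}\,|\varphi|^{2\sigma}\varphi\sum_{n\in\NN}\int_0^{2\pi}e^{i\theta(\lambda_n-\lambda_0)}\,d\theta\;\Pi_n\bigl(|\chi_0|^{2\sigma}\chi_0\bigr),
\]
where $\Pi_n$ is the spectral projection of $\calH_x$ onto the eigenspace of $\lambda_n=n+1$; as $\lambda_n-\lambda_0=n$ the integral vanishes unless $n=0$, and, the ground state being non-degenerate, $\Pi_0(|\chi_0|^{2\sigma}\chi_0)=\|\chi_0\|_{L^{2\sigma+2}}^{2\sigma+2}\chi_0$, so that $\lambda G_{\rm av}(\varphi\chi_0)=\kappa_0|\varphi|^{2\sigma}\varphi\,\chi_0$. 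Since moreover $L_z\chi_0=0$ ($\chi_0$ being radial), inserting $\phi=\varphi(t,z)\chi_0(x)$ into \eqref{eq:phi1d} reduces it exactly to \eqref{eq:1dNLS}; by the uniqueness statement of Item~(i), the solution of \eqref{eq:phi1d} with initial datum $\psi_0$ is thus $\phi(t,x,z)=\varphi(t,z)\chi_0(x)$ with $\varphi$ solving \eqref{eq:1dNLS}, hence $e^{-it\calH_x/\eps^2}\phi(t)=e^{-it/\eps^2}\varphi(t,z)\chi_0(x)$, and Item~(iii) follows from the error bound of Item~(ii).
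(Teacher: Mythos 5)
Your Items (ii) and (iii) follow the paper's route: the authors prove Theorem~\ref{mainthm2} by transposing the arguments of Sections~\ref{sec:cauchy}--\ref{sec:error} with the roles of $x$ and $z$ exchanged, and your treatment of the remainders $r_1^\eps,r_2^\eps$, the uniform $\Sigma^2$ bound, the decomposition $w^\eps=A_1+A_2+A_3+A_4$, and the polarization computation $\lambda G_{\rm av}(\varphi\chi_0)=\kappa_0|\varphi|^{2\sigma}\varphi\,\chi_0$ all match what the paper does (or indicates). The problem is Item (i), which is precisely the point the authors single out as \emph{the} main difference, and where your proposed route has a genuine gap.

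You propose to rebuild Lemma~\ref{lemstri} with one-dimensional Strichartz estimates in $L^q_tL^r_zL^2_x$ and then repeat Steps~1--5 of Proposition~\ref{prop1} ``verbatim.'' This does not close. The vectorial Strichartz norm only controls $L^2$ in the non-dispersive variable, so in the H\"older step of the contraction estimate one must place the factor $|v|^{2\sigma}+|\widetilde v|^{2\sigma}$ in $L^\infty$ of that variable in order to keep the difference $v-\widetilde v$ in $L^2$. In the original setting the non-dispersive variable is the one-dimensional $z$, and the required embedding $H^1(\RR^3)\hookrightarrow L^4_x(\RR^2,L^\infty_z(\RR))$ holds because $H^1(\RR)\hookrightarrow L^\infty(\RR)$. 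After swapping roles, the non-dispersive variable is the two-dimensional $x$, and the analogous embedding $H^1(\RR^3)\hookrightarrow L^p_z(\RR,L^\infty_x(\RR^2))$ is \emph{false} ($H^1$ of a two-dimensional variable does not control $L^\infty$), so the nonlinear estimates in Steps~1, 2 and 5 cannot be reproduced at the $\Sigma^1$ level. This is exactly what the authors mean when they write that one-directional dispersion ``might not be sufficient for the use of Strichartz estimates.'' Their fix is to abandon the Strichartz/$\Sigma^1$ framework entirely for this case: since the theorem assumes $\psi_0\in\Sigma^2$ and $\Sigma^2\hookrightarrow L^\infty(\RR^3)$ is an algebra on which $e^{i\theta\calH_x}$ is unitary, $G_{\rm av}$ is locally Lipschitz on $\Sigma^2$ and local well-posedness, the conservation laws and the $C^1([0,T_{\rm max}),L^2)$ regularity follow from a standard fixed point in $C([0,T],\Sigma^2)$ as in \cite{cazenave2003semilinear}, with no dispersive estimates at all. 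You should replace your Item~(i) argument by this energy-space argument (and, correspondingly, drop the claimed blow-up alternative in $\|\pa_z\phi\|_{L^2}$, which the theorem does not assert and which this method does not deliver).
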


\begin{proof}
The approximation proof follows along the same lines as the one for Theorem \ref{mainthm}, up to adjusting the notation (i.e., switching the roles of $x$ and $z$). 
The main difference concerns the proof of well-posedness for the limiting equation \eqref{eq:phi1d}. In contrast to the case of one-dimensional confinement, equation \eqref{eq:phi1d} only 
admits dispersive properties only in one direction, which might not be sufficient for the use of Strichartz estimates. However, since we are working in $\Sigma^2\hookrightarrow L^\infty(\RR^3_{x,z})$, 
local in-time well-posedness follows from standard arguments, see \cite{cazenave2003semilinear}.
\end{proof}

In comparison to initial data polarized along the ground state $\lambda_0$, the situation for initial data polarized along some higher energy eigenvalue $\lambda_n$, $n\ge 1$, is much more complicated, 
due to their $(n+1)$-fold degeneracy. The corresponding solutions are then of the form
\[
\phi(t,x,z) = e^{-i t  \lambda_n /\eps^2} \sum_{\alpha _1 + \alpha_2 = n} e^{-i t  \mu_\alpha }\varphi_{\alpha} (t,z) \chi_{\alpha}(x_1,x_2),
\]
where the coefficients $\varphi_\alpha\equiv \varphi_{\alpha_1, \alpha_2}$ solve a system of $n+1$ coupled NLS. The latter mixes the $\varphi_\alpha$ through the nonlinearity 
and describes the dynamics within the $n$-th eigenspace. The precise form of the NLS system is rather complicated and hence, we shall leave its details to the reader, 
in particular, since one anyway might prefer the description of the dynamics via the effective model \eqref{eq:phi1d} instead.


 \end{document}